\newtheorem{thm}{Theorem}[section]
\newtheorem{lemma}[thm]{Lemma}
\theoremstyle{definition}
\newtheorem{remark}[thm]{Remark}
\def\XXint#1#2#3{{\setbox0=\hbox{$#1{#2#3}{\int}$}
         \vcenter{\hbox{$#2#3$}}\kern-.5\wd0}}
\def\R{\mathbb{R}}
\def\C{\mathbb{C}}
\def\e{\varepsilon}
\def\A{\mathbf{A}}
\def\B{\mathbf{B}}
\def\b{\mathcal{B}}
\def\e{\varepsilon}
\numberwithin{equation}{section}
\begin{document}

\title{Exponential Decays of Steklov Eigenfunctions\\  for the Magnetic Laplacian}

\author{
Zhongwei Shen  \thanks{\noindent Institute for Theoretical Sciences, Westlake University,
No. 600 Dunyu Road, Xihu District, Hangzhou, Zhejiang  310030, P.R. China.\\
\emph{E-mail address}: \texttt{shenzhongwei@westlake.edu.cn} } 
 }
\date{}
\maketitle

\begin{abstract} 

Consider the Dirichlet-to-Neumann map $\Lambda_\beta$
associated with the  Schr\"odinger operator $(D+\beta \A)^2$ with a magnetic potential in a bounded Lipschitz 
domain $\Omega$,
where $\beta>1$ is the field strength parameter.
Assume that the magnetic field $\B=\nabla \times \A$ is of finite type.
We show that if $\beta>\beta_0$,
 the ground state for $\Lambda_\beta$ decays exponentially away from a neighborhood of the  subset
of  $\partial\Omega$, on which $\B$ vanishes to the maximal order.

\medskip

\noindent{\it Keywords}.  Steklov Eigenfunction; Exponential Decay; Schr\"odinger Operator; Magnetic Field.

\medskip

\noindent {\it MR (2020) Subject Classification}: 35P20; 35Q40.

\end{abstract}


\section{Introduction}\label{section-1}

Let  $D=-i \nabla$ and $\A=(A_1, A_2, \dots, A_d)\in C^1(\overline{\Omega}; \R^d)$,
where $\Omega$ is a bounded Lipschitz domain in $\R^d$, $d\ge 2$.
Consider the Schr\"odinger operator with a magnetic potential, 
\begin{equation}\label{op-1}
H (\beta \A) =(D+\beta \A)^2, 
\end{equation}
also called the magnetic Laplacian, 
where $\beta> 1 $ is the field strength  parameter. 
The asymptotic behaviors of eigenvalues and eigenfunctions for the operator \eqref{op-1} with either the
Dirichlet condition $u=0$ on $\partial \Omega$ or the Neumann condition  $n \cdot (D +\beta \A)u=0$ on $\partial \Omega$, 
as $\beta \to \infty$, have been studied extensively since 1980's.
 The asymptotic expansions of the ground state  energies, with remainder estimates,
as well as exponential localizations of the ground states, have been established 
under various assumptions. 
For a survey and  references  in this area as well as applications to  the theory of  superconductivity, 
we  refer the reader to two expository books, \cite{Helffer-book} by S. Fournais and B. Helffer and \cite {Raymond-book}  by N. Raymond.
Let $\B=\nabla \times \A$ denote the magnetic field.
 It is  known that the leading orders of the ground state energies 
depend on the highest  vanishing order of  $\B$ on  $\overline{\Omega}$  \cite{Montgomery, Helffer-1996, Lu-1999, Helffer-2001, Pan-2002, Miqueu-2018,
Dauge-2020, Shen-2025}.
Moreover, the ground states decay exponentially away from the  subset of $\overline{\Omega}$, on which 
$\B$ vanishes to the highest order \cite{Helffer-1996, Miqueu-2018, Dauge-2020}.

In recent years, there is a growing interest in the study of spectrum properties of
the Dirichlet-to-Neumann map, associated with the Laplacian or a general Schr\"odinger operator.
For references in the case of the Laplacian on a compact manifold, we refer the reader to a survey article \cite{C-2023}.
For  recent works in the case of the magnetic Laplacian, see    \cite{CGHP, Helffer-F2024, Helffer-2024, Shen-2025} and their references.
Let $n$ denote the outward unit normal to $\partial\Omega$.
The Dirichlet-to-Neumann map  associated with $H(\beta \A)$ is defined by
\begin{equation}\label{DN}
\Lambda_\beta : f  \to n \cdot (D+\beta \A) u,
\end{equation}
where $f\in H^{1/2}(\partial\Omega; \C)$ and $u\in H^1(\Omega; \C)$ is the solution of the 
Dirichlet problem, 
\begin{equation}\label{EF}
\left\{
\aligned
(D+\beta \A)^2 u & =0 & \quad & \text{ in } \Omega, \\
u&=f & \quad & \text{ on } \partial\Omega.
\endaligned
\right.
\end{equation}
Let 
$\lambda^{D\!N}(\beta \A, \Omega)$ denote  the ground state energy for $\Lambda_\beta$.
Under the assumption that $\B$ 
 is of finite type on $\partial\Omega$, it was proved by the present author in \cite{Shen-2025} that
\begin{equation}\label{up-low}
c \beta^{\frac{1}{\kappa_0 +2}}
\le \lambda^{D\!N} (\beta \A, \Omega)
\le C  \beta^{\frac{1}{\kappa_0 +2}}
\end{equation}
for $\beta >\beta_0$,
where $\kappa_0\ge 0$ is the highest vanishing order of $|\B|$ on $\partial\Omega$ (see \eqref{kappa}).
We also obtained a one-term asymptotic expansion, with reminder estimates, for 
$\lambda^{D\!N} (\beta \A, \Omega)$ under more restrictive  conditions.
The purpose of this paper is to study the exponential decays of eigenfunctions for the operator $\Lambda_\beta$. 

 We shall assume that the magnetic field $\B$ is of finite type on $\overline{\Omega}$
 throughout this paper.
This means that there exist  some integer $\kappa_*\ge 0$ and $\tau_*>0$ such that
\begin{equation}\label{cod-1}
\sum_{|\alpha|\le \kappa_*} |\partial^\alpha \B (x)| \ge \tau_*
\end{equation}
for any $x\in \overline{\Omega}$.
The following is the main result of the paper.

\begin{thm}\label{main-thm}
Let $\Omega$ be a bounded Lipschitz domain in $\R^d$, $d\ge 2$ and $\A\in C^\infty(\overline{\Omega}, \R^d)$.
Assume that $\B$ is of finite type on $\overline{\Omega}$.
Let $u\in H^1(\Omega; \C)$ be a weak solution of
\begin{equation}\label{eigen} 
\left\{
\aligned
(D+\beta \A)^2 u &=0 & \quad & \text{ in } \Omega,\\
n \cdot (D+\beta \A) u &= \lambda u & \quad & \text{ on } \partial\Omega
\endaligned
\right.
\end{equation}
for some $\lambda>0$.
Then, if $\beta > \beta_0$, 
\begin{equation}\label{main-1}
|u(x)|\le C \lambda^{\frac{d-1}{2}}
\exp\left\{- \e  d_\beta  (x, W_\beta (C\lambda))\right\}
 \| u \|_{L^2(\partial\Omega)}
\end{equation}
for any $x\in \overline{\Omega}$.
The constants $C>1, \beta_0>1, \e >0$ depend only on $\Omega$ and $\B$.
\end{thm}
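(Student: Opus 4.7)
The plan is to adapt the magnetic Agmon method used in \cite{Helffer-1996, Miqueu-2018, Dauge-2020} for the Dirichlet and Neumann ground states to the present Steklov setting. The starting point is the weighted integration-by-parts identity obtained by multiplying $(D+\beta\A)^2 u = 0$ by $e^{2\phi}\bar u$ for a real Lipschitz weight $\phi$ on $\overline{\Omega}$. Using $n\cdot(D+\beta\A)u = \lambda u$ on $\partial\Omega$ and taking real parts, one obtains the magnetic Agmon identity
\[
\int_\Omega \bigl|(D+\beta\A)(e^\phi u)\bigr|^2\,dx = \int_\Omega |\nabla\phi|^2 e^{2\phi}|u|^2\,dx + \lambda \int_{\partial\Omega} e^{2\phi}|u|^2\,d\sigma.
\]

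The next ingredient is a quadratic-form lower bound
\[
\int_\Omega \bigl|(D+\beta\A) w\bigr|^2\,dx \ge c \int_\Omega V_\beta(x)\,|w|^2\,dx \qquad \text{for } w\in H^1(\Omega),
\]
where $V_\beta$ is an effective potential capturing the local ground-state energy of the magnetic Laplacian at $x$. Under the finite-type assumption \eqref{cod-1} one expects $V_\beta(x) \asymp \beta^{2/(\kappa(x)+2)}$, with $\kappa(x) \le \kappa_*$ the local vanishing order of $|\B|$ at $x$. This bound would be derived from a multi-scale partition of unity on $\overline{\Omega}$, with ball radii $\asymp \beta^{-1/(\kappa(x)+2)}$, combined with the local Montgomery/Lu--Pan lower bounds that underpin \eqref{up-low}. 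With $V_\beta$ in hand, define the Agmon pseudo-distance $d_\beta(x,y) = \inf_\gamma \int_\gamma V_\beta^{1/2}\,ds$ over Lipschitz paths in $\overline{\Omega}$ and set $W_\beta(\mu) = \{V_\beta < \mu\}$; then $\rho(x) := d_\beta(x, W_\beta(C\lambda))$ is Lipschitz with $|\nabla \rho|^2 \le V_\beta$ a.e.

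Substituting $\phi = \e \rho$ with $\e>0$ small into the Agmon identity absorbs the $|\nabla \phi|^2$ term into $c\int V_\beta e^{2\phi}|u|^2$, leaving
\[
\int_\Omega V_\beta \,e^{2\e\rho}\,|u|^2\,dx \le C\lambda \int_{\partial\Omega} e^{2\e\rho}|u|^2\,d\sigma.
\]
To close this into a weighted $L^2$ bound
\[
\int_\Omega e^{2\e\rho}|u|^2\,dx \le C\lambda^{-1}\,\|u\|^2_{L^2(\partial\Omega)},
\]
one uses that $V_\beta \ge C\lambda$ off $W_\beta(C\lambda)$ and $\rho \equiv 0$ on $W_\beta(C\lambda)$, provided one can also control the boundary integral on $\partial\Omega\setminus W_\beta(C\lambda)$ — the latter can be handled either by iteration/bootstrap or by a parallel tangential Agmon argument on $\partial\Omega$ using $\Lambda_\beta$ itself. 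Finally, a Moser/Caccioppoli subsolution estimate for $|u|^2$ on the ball $B(x, r_x)\cap\Omega$ of radius $r_x \asymp \lambda^{-1}$ (the natural Steklov scale), combined with the near-constancy of $\rho$ on such a ball and the a~priori bound $\lambda \lesssim \beta^{1/(\kappa_0+2)}$ from \eqref{up-low}, converts this weighted bound into the pointwise estimate \eqref{main-1} with prefactor $\lambda^{(d-1)/2}$.

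The main obstacles are twofold. First, producing the pointwise quadratic-form lower bound with $V_\beta$ having the correct $\beta$-scaling at every scale of vanishing of $\B$ is delicate: the multi-scale partition of unity must be compatible with the relevant model operators (Landau, Montgomery, or their higher-order analogues) on each scale. Second, in contrast to the Dirichlet/Neumann setting, the Steklov boundary condition couples the bulk decay to the boundary decay, so propagating the exponential smallness from $\partial\Omega\cap W_\beta(C\lambda)$ to $\partial\Omega\setminus W_\beta(C\lambda)$ requires an argument with no direct Dirichlet/Neumann analogue; all the other steps are essentially standard once these two points are in place.
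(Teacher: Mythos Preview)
Your overall architecture --- Agmon identity, form lower bound with an effective potential $V_\beta\asymp \widetilde m(x,\beta\B)^2$, then local sup-estimate --- is the same as the paper's, and your exact identity
\[
\int_\Omega \bigl|(D+\beta\A)(e^\phi u)\bigr|^2 = \int_\Omega |\nabla\phi|^2 e^{2\phi}|u|^2 + \lambda \int_{\partial\Omega} e^{2\phi}|u|^2
\]
is in fact cleaner than what the paper does (there the weight is first $C^2$-regularized, producing extra $\Delta\psi$ and boundary $\partial_n\psi$ terms; see Lemma~\ref{lemma-L} and Lemma~\ref{lemma-a2}). Two steps in your plan, however, do not go through as written.

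\textbf{Closing the boundary integral.} What actually closes $\int_{\partial\Omega\setminus W_\beta(C\lambda)} e^{2\phi}|u|^2$ is the \emph{boundary} half of the lower bound \eqref{key}, namely $\int_{\partial\Omega}\widetilde m\,|\psi|^2\le C\int_\Omega|(D+\beta\A)\psi|^2$, applied to $\psi=e^{\phi}u$ (this is Theorem~\ref{theorem-m}, proved in \cite{Shen-2025}). Combined with your identity it gives $\int_{\partial\Omega}\widetilde m\,e^{2\phi}|u|^2\le C\lambda\int_{\partial\Omega}e^{2\phi}|u|^2$, and then splitting $\partial\Omega$ at $\{\widetilde m\ge T\lambda\}$ absorbs for $T$ large. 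Your proposed ``tangential Agmon on $\partial\Omega$ using $\Lambda_\beta$'' is exactly this inequality in disguise (since $\langle\Lambda_\beta f,f\rangle=\int_\Omega|(D+\beta\A)\tilde f|^2$), but you should state it explicitly; a bare iteration/bootstrap without it does not seem to terminate.

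\textbf{The local radius and the prefactor.} Taking $r_x\asymp\lambda^{-1}$ is the source of two problems. First, since $|\nabla\rho|\le \widetilde m$ and not $\le\lambda$, on a ball of radius $\lambda^{-1}$ the weight $\rho$ oscillates by $\sim\widetilde m(x)/\lambda$, which is large exactly in the region where you want decay; so ``near-constancy of $\rho$ on such a ball'' fails. Second, the intermediate bound $\int_\Omega e^{2\e\rho}|u|^2\le C\lambda^{-1}\|u\|_{L^2(\partial\Omega)}^2$ need not hold: the well $W_\beta$ lives on $\partial\Omega$, and if $\B$ vanishes to higher order in the interior than on $\partial\Omega$ one may have $V_\beta(x)\ll\lambda^2$ for interior $x$. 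The paper instead keeps the bulk bound in the form $\int_\Omega \widetilde m^2 e^{2\e\rho}|u|^2\le C\lambda\|u\|_{L^2(\partial\Omega)}^2$ (Theorem~\ref{L-2-thm}), uses the local radius $r_x=c\,\widetilde m(x)^{-1}$ (so $\rho$ varies by $O(1)$ on $B(x,r_x)$), which yields
\[
|u(x)|\le C\,\widetilde m(x)^{\frac{d-2}{2}}\lambda^{1/2}\,e^{-\e\rho(x)}\|u\|_{L^2(\partial\Omega)},
\]
and then converts $\widetilde m(x)$ into $\lambda$ via the slow-variation estimate $\widetilde m(x)\le C\lambda\bigl(1+\rho(x)\bigr)^{\ell_1}$ of Theorem~\ref{lemma-a1}, absorbing the resulting polynomial in $\rho(x)$ into a slightly smaller exponential rate. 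Near $\partial\Omega$ the same scheme works once one has a uniform $L^2\!\to\!L^\infty$ estimate on balls meeting the boundary (Lemma~\ref{lemma-B}); your Moser/Caccioppoli idea would also serve here, but again at scale $\widetilde m(x)^{-1}$, not $\lambda^{-1}$.
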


We need to introduce the notations used in \eqref{main-1}. First, for $t>0$, 
\begin{equation}\label{E-1}
W_\beta (t)  =\big\{ x\in \partial\Omega:   \widetilde{m} (x, \beta \B) \le t \big\}
\end{equation}
is a magnetic well on the boundary $\partial\Omega$, where
\begin{equation}\label{mm}
\widetilde{m}(x, \beta \B)
=\sum_{|\alpha|\le \kappa_*} \beta ^{\frac{1}{|\alpha|+2}}
 |\partial^\alpha \B (x)|^{\frac{1}{|\alpha|+2}}.
 \end{equation}
 Next, for $x\in \overline{\Omega}$ and $E\subset \overline{\Omega}$, the distance $d_\beta (x, E)$ is defined by
 \begin{equation*}
 d_\beta (x, E) =\inf\big \{ d_\beta (x, y): \ y \in E\big\},
 \end{equation*}
where $d_\beta (x, y)$ is an Agmon distance with respect to the metric $\widetilde{m}(x, \beta \B) ds^2$, given by
\begin{equation}\label{dist}
d_\beta (x, y)
=\inf \left\{
\int_0^1 \widetilde{m}(\gamma(t), \beta \B) |\gamma^\prime (t)|\, dt :\ \ \
\gamma: [0, 1] \to \overline{\Omega}, \gamma (0)=x \text{ and } \gamma (1)=y \right\}.
\end{equation}

Let  $x\in \overline{\Omega}$.
Under the finite-type condition \eqref{cod-1}, there exists an integer $\kappa=\kappa (x)\ge 0$ such that
$\partial^\alpha \B(x)=0$ for any $\alpha$ with $|\alpha |\le \kappa-1$, and
$\partial^\alpha \B (x)\neq  0$ for some $\alpha$ with $|\alpha|=\kappa$.
Let 
\begin{equation}\label{kappa}
\kappa_0 =\max \{ \kappa (x): x\in \partial\Omega \}
\end{equation}
and
\begin{equation*}
\Gamma_0 =\big\{ x\in \partial\Omega: \kappa (x)=\kappa_0\big \}.
\end{equation*}
Note that by \eqref{up-low},  if $\lambda=\lambda^{D\!N}(\beta\A, \Omega)$, then $\lambda
\approx \beta^{\frac{1}{\kappa_0+2}}$ and
\begin{equation}\label{WW}
W_\beta (C\lambda)
\subset \bigg\{ x\in \partial\Omega: 
\sum_{|\alpha|\le \kappa_0-1} 
\beta^{\frac{1}{|\alpha|+2}}
|\partial^\alpha \B(x)|^{\frac{1}{|\alpha|+2}}
\le C \beta^{\frac{1}{\kappa_0+2}} \bigg\}
\end{equation}
for $\beta > \beta_0$.
It follows by Theorem \ref{main-thm}
that the eigenfunctions associated with $\lambda^{D\!N} (\beta \A, \Omega)$
are exponentially localized in a neighborhood of $\Gamma_0$.

The paper is organized as follows.
In Section 2 we study the Agmon distance \eqref{dist} and show that 
$$
\widetilde{m}(x, \beta \B)
\le C \widetilde{m}(y, \beta \B) \big\{ 1+ d_\beta (x, y) \big\}^{\ell_1}
$$
for any $x, y\in \overline{\Omega}$, if $\beta $ is sufficiently large.
In Section 3 we establish an $L^2$ bound for solutions of \eqref{eigen}, using the inequality 
\begin{equation}\label{key}
\int_{\partial\Omega}
\widetilde{m} (x, \beta \B) |\psi|^2\, dx
+
\int_\Omega \widetilde{m} (x, \beta \B)^2  |\psi|^2\, dx
\le C \int_\Omega |(D+\beta \A)\psi|^2\, dx
\end{equation}
for any $\psi \in H^1(\Omega; \C)$.
The estimates in \eqref{key} were proved in \cite{Shen-2025}.
The pointwise bound for the Steklov eigenfunctions in \eqref{main-1}
is proved in Section 4.
The proof relies on a pointwise estimate for the Neumann function $N_\A(x, y)$
for the operator $(D+\A)^2$ in $\Omega$, established  in \cite{Shen-2025-2}.
Finally, we give two examples in Section 5.


\section{An Agmon distance function}

Let $\Omega$ be a bounded Lipschitz domain in $\R^d$. 
For $t>0$, let
 \begin{equation}\label{O}
 \mathcal{O}_t = \{ x\in \R^d: \text{\rm dist}(x, \Omega)< t \}.
 \end{equation}
Let $\A\in C^\infty(\overline{\Omega}; \R^d)$. 
We extend $\B=\nabla \times \A$ to $\R^d$ so that $\|\B \|_{C^{\kappa_*+1}(\R^d)}\le C \| \B \|_{C^{\kappa_*+1}(\overline{\Omega})}$
 (there is no need to extend $\A$).
Suppose   $\B$ satisfies the condition \eqref{cod-1} for any $x\in \overline{\Omega}$.
By compactness, there exists $r_0>0$, depending on
 $\Omega$, $(\kappa_*, \tau_*)$ in \eqref{cod-1} and $\| \B\|_{C^{\kappa_*+1}(\overline{\Omega})}$, such that
 \begin{equation}\label{cod-2}
\sum_{|\alpha|\le \kappa_*} |\partial^\alpha \B (x)| \ge (\tau_*/2)
\end{equation}
  for any
 $x\in \mathcal{O}_{4r_0}$.
 Let $\b(x, r)$ denote the ball centered at $x$ of radius $r$.
 We assume that  $r_0$ is sufficiently small so that for any $x_0\in \partial\Omega$,
 $\b(x_0, 4r_0)\cap \Omega$ is given by the region above a Lipschitz graph in a coordinate system,
 obtained from the standard one through rotation and translation.
  
 \begin{lemma}\label{lemma-01}
 Let $\b(x, r) \subset \mathcal{O}_{3r_0}$. Then 
 \begin{equation}\label{B-0}
 \sup_{\b(x, r)} |\partial^\alpha \B|\le 
 \frac{C}{r^{|\alpha|}} \fint_{\b(x, r)} |\B|
 \end{equation}
 for any $\alpha$ with $|\alpha|\le \kappa_*$,  and 
 \begin{equation}\label{B-1a}
 \sup_{\b(x, r)} |\B |\ge c\, r^{\kappa_*},
 \end{equation}
where  $C, c>0$ depend only on $\Omega$, $(\kappa_*, \tau_*)$ in \eqref{cod-1} and $\| \B\|_{C^{\kappa_*+1}(\overline{\Omega})}$.
 \end{lemma}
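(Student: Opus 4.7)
Both inequalities should follow from a single Taylor-expansion argument combined with the equivalence of norms on the finite-dimensional space $\mathcal{P}_{\kappa_*}$ of polynomials on $\b(0,1)$ of degree at most $\kappa_*$: on $\mathcal{P}_{\kappa_*}$ the seminorms $\sup|P|$, $\fint|P|$, $\sup|\partial^\alpha P|$ (for $|\alpha|\le\kappa_*$), and the maximum of coefficient moduli are all comparable with constants depending only on $d$ and $\kappa_*$. My plan is first to treat small $r$ by this approach and then to close the remaining bounded range of $r$ by a compactness argument on $\overline{\mathcal{O}_{3r_0}}$.

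Rescale by $f(y):=\B(x+ry)$ for $y\in\b(0,1)$. Since $\B\in C^{\kappa_*+1}(\R^d)$ with bounded norm, Taylor's theorem applied to $\B$ and, more generally, to each derivative $\partial^\alpha\B$ with $|\alpha|\le\kappa_*$, provides a decomposition
\[
 f(y)=P(y)+R(y),\qquad P(y)=\sum_{|\beta|\le\kappa_*}\frac{\partial^\beta\B(x)}{\beta!}(ry)^\beta,
\]
together with the uniform bound
\[
 \sup_{\b(0,1)}|\partial^\alpha R|\le C\,r^{\kappa_*+1}\qquad\text{for every }|\alpha|\le\kappa_*,
\]
where $C$ depends only on $\|\B\|_{C^{\kappa_*+1}(\R^d)}$ and on $d,\kappa_*$. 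The key cancellation behind the uniform exponent $r^{\kappa_*+1}$ is that the chain-rule factor $r^{|\alpha|}$ from $\partial^\alpha f(y)=r^{|\alpha|}(\partial^\alpha\B)(x+ry)$ combines with the Taylor remainder for $\partial^\alpha\B$ at $x$ (of order $\kappa_*-|\alpha|$, giving $O(r^{\kappa_*-|\alpha|+1})$) to produce a uniform $O(r^{\kappa_*+1})$ for every admissible $\alpha$.

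Because $x\in\mathcal{O}_{3r_0}\subset\mathcal{O}_{4r_0}$, the finite-type condition \eqref{cod-2} produces for each such $x$ some $\beta^*=\beta^*(x)$ with $|\beta^*|\le\kappa_*$ and $|\partial^{\beta^*}\B(x)|\ge c_0$ for a constant $c_0>0$ independent of $x$. The corresponding coefficient of $P$ has modulus at least $c\,r^{|\beta^*|}\ge c\,r^{\kappa_*}$ whenever $r\le 1$, and the polynomial norm equivalence yields $\fint_{\b(0,1)}|P|\ge c'\,r^{\kappa_*}$. Subtracting $\fint|R|\le C\,r^{\kappa_*+1}$ and restricting to $r\le r_1$ sufficiently small gives
\[
 \fint_{\b(x,r)}|\B|=\fint_{\b(0,1)}|f|\ge c\,r^{\kappa_*},
\]
which already implies \eqref{B-1a} via $\sup\ge\mathrm{avg}$. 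The same polynomial inequality controls the derivatives,
\[
 \sup_{\b(0,1)}|\partial^\alpha P|\le C\fint_{\b(0,1)}|P|\le C\fint|f|+C\,r^{\kappa_*+1},
\]
and combining with $\sup|\partial^\alpha R|\le C\,r^{\kappa_*+1}$ gives $\sup_{\b(0,1)}|\partial^\alpha f|\le C\fint|f|+C\,r^{\kappa_*+1}$. The lower bound on $\fint|f|$ just proved absorbs the remainder since $r^{\kappa_*+1}\le Cr\cdot\fint|f|$, so $\sup|\partial^\alpha f|\le C\fint|f|$; rescaling back yields $r^{|\alpha|}\sup_{\b(x,r)}|\partial^\alpha\B|\le C\fint_{\b(x,r)}|\B|$, which is \eqref{B-0}.

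The remaining range $r\ge r_1$ is handled by compactness of $\overline{\mathcal{O}_{3r_0}}$: the averages $\fint_{\b(x,r)}|\B|$ are a continuous and strictly positive function of the parameters in the compact range $(x,r)\in\overline{\mathcal{O}_{3r_0}}\times[r_1,R_{\max}]$ — positivity because, by \eqref{cod-2}, $\B$ cannot vanish identically on any ball inside $\mathcal{O}_{4r_0}$ without forcing all its derivatives at the center to vanish — hence they are uniformly bounded below by a positive constant, while the derivatives $\sup|\partial^\alpha\B|$ are universally bounded above by $\|\B\|_{C^{\kappa_*}}$; both inequalities then follow with possibly enlarged constants. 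The main technical obstacle I anticipate is the bookkeeping in the Taylor remainder — establishing $\sup|\partial^\alpha R|\le C\,r^{\kappa_*+1}$ uniformly in $|\alpha|\le\kappa_*$ — and the clean matching between the small-$r$ and the bounded-$r$ regimes; once these are in place, the whole argument rests on two classical pillars, finite-dimensional norm equivalence on polynomials and the uniform lower bound coming from \eqref{cod-2}.
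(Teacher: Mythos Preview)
Your argument is correct. The paper does not actually prove this lemma in the text; it simply cites \cite[Lemma~10.1]{Shen-2025-2}, so there is no in-paper proof to compare against. Your approach---Taylor expansion of $\B$ to order $\kappa_*$, equivalence of norms on the finite-dimensional space of polynomials of degree $\le\kappa_*$ on the unit ball, and a compactness argument to close the bounded range of $r$---is the standard route to estimates of this type and is almost certainly what the cited reference contains. The bookkeeping you flagged (that $\sup_{\b(0,1)}|\partial^\alpha R|\le Cr^{\kappa_*+1}$ uniformly in $|\alpha|\le\kappa_*$) is handled exactly as you say: differentiating $P$ in $y$ produces $r^{|\alpha|}$ times the Taylor polynomial of $\partial^\alpha\B$ of degree $\kappa_*-|\alpha|$, and the remainder for the latter is $O(r^{\kappa_*-|\alpha|+1})$. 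One small wording fix: in the compactness step, the parameter set is not all of $\overline{\mathcal{O}_{3r_0}}\times[r_1,R_{\max}]$ but the closed (hence compact) subset $\{(x,r):\overline{\b(x,r)}\subset\overline{\mathcal{O}_{3r_0}},\ r\ge r_1\}$; the positivity of $\fint_{\b(x,r)}|\B|$ there follows exactly from your observation that $\B$ cannot vanish on a ball centered in $\mathcal{O}_{4r_0}$ without contradicting \eqref{cod-2}.
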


\begin{proof}

See \cite[Lemma 10.1]{Shen-2025-2}.
\end{proof}

Let $\beta_0=2 c^{-1} r_0^{-(\kappa+2)}$, where $c$ is given by \eqref{B-1a}.
For any $x\in \R^d$ and $\beta> \beta_0$, define $m(x, \beta\B)$ by
\begin{equation}\label{m}
\frac{1}{m(x, \beta \B)}
=\sup \Big\{ r> 0 : \sup_{\b(x, r)} |\beta \B| \le \frac{1}{r^2} \Big\}.
\end{equation}
 It follows from \eqref{B-1a} that  if $\beta>\beta_0$, then
$m(x, \beta\B)^{-1}<  r_0$ for any $x\in \mathcal{O}_{2r_0}$.
Moreover, by \cite[Lemma 10.2]{Shen-2025-2}, 
\begin{equation}\label{m-eq}
c\,  \widetilde{m}(x, \beta \B)
\le m (x, \beta \B)\le C \widetilde{m} (x, \beta \B)
\end{equation}
for any $x\in \Omega$, where $\widetilde{m}(x, \beta \B)$ is given by \eqref{mm} and 
$C, c>0$ depend only on $\Omega$, $(\kappa_*, \tau_*)$ in \eqref{cod-1} and
$\| \B\|_{C^{\kappa_* +1}(\overline{\Omega})}$.
The estimates in \eqref{m-eq} allow us to interchange $m(x,\beta \B)$ freely with 
$\widetilde{m}(x, \beta \B)$.

\begin{lemma}\label{lemma-02}
Let $\b(x_0, r_0)\subset \mathcal{O}_{2r_0}$. 
Then, for $\beta> \beta_0$, 
\begin{align}
m(x, \beta \B) & \le C \{ 1+ |x-y| m(y, \beta \B) \}^{\ell_0} m(y, \beta \B)\label{m-a}, \\
m(y, \beta \B)& \ge \frac{c \, m(x, \beta \B)}{ \{ 1+ |x-y| m(x, \beta \B)\}^{\frac{\ell_0}{\ell_0+1}}}\label{m-b},
\end{align}
for any $x, y\in \b(x_0, r_0)$,
where $C, c, \ell_0>0$ depend only on $\Omega$, $(\kappa_*, \tau_*)$ in \eqref{cod-1} and $\| \B\|_{C^{\kappa_*+1}(\overline{\Omega})}$.
\end{lemma}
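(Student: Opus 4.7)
The plan is to establish the first inequality \eqref{m-a} directly via Taylor expansion combined with Lemma \ref{lemma-01}, and then derive the second inequality \eqref{m-b} from the first by a purely algebraic argument. Since the equivalence $m(x,\beta\B) \asymp \widetilde{m}(x,\beta\B)$ from \eqref{m-eq} lets us freely switch between the two quantities, I will work with $\widetilde{m}$ throughout; this is convenient because $\widetilde{m}$ is defined directly in terms of pointwise values of $\partial^\alpha \B$, which are what Taylor's theorem manipulates.

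The first step is to feed the definition \eqref{m} into Lemma \ref{lemma-01} at the point $y$ with scale $\rho_y := 1/m(y,\beta\B)$. By definition, $\sup_{\b(y,\rho_y)} |\beta\B| \le 1/\rho_y^2$, so $\fint_{\b(y,\rho_y)} |\B| \le C/(\beta \rho_y^2)$, and \eqref{B-0} yields the pointwise bound $|\partial^\alpha \B(y)| \le C/(\beta \rho_y^{|\alpha|+2})$ for every $|\alpha| \le \kappa_*$. The second step expands $\partial^\alpha \B$ around $y$ using the $C^{\kappa_*+1}$ regularity of $\B$:
\[
|\partial^\alpha \B(x)| \le \sum_{|\gamma|\le \kappa_*-|\alpha|} \frac{|x-y|^{|\gamma|}}{\gamma!}\, |\partial^{\alpha+\gamma}\B(y)| + C|x-y|^{\kappa_*-|\alpha|+1}.
\]
Raising to the power $1/(|\alpha|+2) \le 1/2$ (using subadditivity of concave powers) and multiplying by $\beta^{1/(|\alpha|+2)}$, each main term rearranges as
\[
\beta^{1/(|\alpha|+2)} |x-y|^{|\gamma|/(|\alpha|+2)} |\partial^{\alpha+\gamma}\B(y)|^{1/(|\alpha|+2)} \le \widetilde{m}(y,\beta\B)\, \bigl(\widetilde{m}(y,\beta\B)\, |x-y|\bigr)^{|\gamma|/(|\alpha|+2)},
\]
which follows by writing $\beta^{1/(k+2)} |\partial^{\alpha+\gamma}\B(y)|^{1/(k+2)} \le \widetilde{m}(y,\beta\B)$ with $k=|\alpha|+|\gamma|$ and regrouping exponents. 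For the Taylor remainder I use the lower bound $\widetilde{m}(y,\beta\B) \ge c\beta^{1/(\kappa_*+2)}$ (which follows from \eqref{B-1a}) to trade $\beta^{1/(|\alpha|+2)}$ for $\widetilde{m}(y,\beta\B)^{(\kappa_*+2)/(|\alpha|+2)}$, reducing the remainder to the same structural form as the main terms, with the leftover factor $|x-y|^{1/(|\alpha|+2)}$ bounded by $(2r_0)^{1/(|\alpha|+2)}$. Summing over $|\alpha| \le \kappa_*$ gives
\[
\widetilde{m}(x,\beta\B) \le C\, \widetilde{m}(y,\beta\B)\, \bigl(1 + \widetilde{m}(y,\beta\B)\,|x-y|\bigr)^{\kappa_*/2},
\]
which, via \eqref{m-eq}, is \eqref{m-a} with $\ell_0 = \kappa_*/2$.

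For \eqref{m-b}, I swap the roles of $x$ and $y$ in \eqref{m-a} to get $m(y,\beta\B)/m(x,\beta\B) \le C(1 + m(x,\beta\B)|x-y|)^{\ell_0}$, and then invert \eqref{m-a} itself: setting $A = m(x,\beta\B)/m(y,\beta\B)$ and $B = m(x,\beta\B)|x-y|$, the first inequality becomes $A \le C(1+B/A)^{\ell_0}$. Splitting into the cases $B \le A$ (where $A \le C$ and the claim is trivial) and $B > A$ (where $A^{\ell_0+1} \le CB^{\ell_0}$, giving $A \le CB^{\ell_0/(\ell_0+1)}$) yields $A \le C(1+B)^{\ell_0/(\ell_0+1)}$, which is \eqref{m-b}.

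The main obstacle I anticipate is the bookkeeping of exponents in the Taylor expansion step, specifically making sure that the $\beta$-dependence of the Taylor remainder does not spoil the scale-invariant form of the estimate. The essential trick is the finite-type lower bound $\widetilde{m}(y,\beta\B) \ge c\beta^{1/(\kappa_*+2)}$, which lets one convert surplus powers of $\beta^{1/(|\alpha|+2)}$ into powers of $\widetilde{m}(y,\beta\B)$; once this substitution is in place, the remainder factorizes in the same way as the leading Taylor terms and the estimate closes.
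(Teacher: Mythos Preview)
Your proof is correct but takes a different route from the paper. The paper's argument is essentially a citation: it observes, using only the $\alpha=0$ case of \eqref{B-0}, that $\sup_{\b}|\beta\B|\le C\fint_{\b}|\beta\B|$ for balls $\b\subset\mathcal{O}_{3r_0}$, so that $|\beta\B|$ is a $B_\infty$ weight on this set, and then invokes the general weight-theoretic result of \cite[Lemma~1.4]{Shen-1995} for the associated scale function $m$. Your approach instead works directly with $\widetilde m$ via Taylor expansion, using the full strength of \eqref{B-0} (all $|\alpha|\le\kappa_*$) together with the finite-type lower bound $\widetilde m(y,\beta\B)\ge c\,\beta^{1/(\kappa_*+2)}$ to absorb the remainder. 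The payoff of your argument is that it is self-contained (no external weight lemma) and produces the explicit exponent $\ell_0=\kappa_*/2$; the payoff of the paper's argument is that it identifies the structural reason --- the $B_\infty$ property --- and would apply verbatim to any potential satisfying that reverse-H\"older condition, not just to finite-type magnetic fields. Two minor remarks on your write-up: your ``Step 1'' (feeding $\rho_y$ into Lemma~\ref{lemma-01}) is actually redundant, since the bound $\beta^{1/(k+2)}|\partial^{\alpha+\gamma}\B(y)|^{1/(k+2)}\le\widetilde m(y,\beta\B)$ that you use in the main-term estimate is immediate from the definition of $\widetilde m$; and the sentence about swapping $x$ and $y$ at the start of your derivation of \eqref{m-b} is a distraction --- the subsequent algebraic inversion of \eqref{m-a} is what does the work, and it is correct.
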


\begin{proof}
It follows from \eqref{B-0} with $\alpha =0$ that 
$$
\sup_{\b} |\beta \B|\le C \fint_\b |\beta \B|
$$
for any ball  $\b\subset \mathcal{O}_{3r_0}$. Thus, 
$|\beta \B|$ is a $B_\infty$ weight on the set $\mathcal{O}_{3r_0}$.
As a result, the estimates \eqref{m-a}-\eqref{m-b} for  $x, y\in \b(x_0, r_0) \subset \mathcal{O}_{2r_0}$
follow from the proof of \cite[Lemma 1.4]{Shen-1995}.
The condition $m(x, \beta\B)^{-1} < r_0$ is needed to carry out the argument in a fixed ball $\b(x_0, r_0)$ instead of $\R^d$.
\end{proof}

Let $d_\beta (x, y)$ be defined by \eqref{dist}.


\begin{thm}\label{lemma-a1}
Suppose $\B$ satisfies the finite-type condition \eqref{cod-1}.
There exist $\beta_0>0$, $\ell_1>0$ and $C>0$, depending only on $\Omega$, $(\kappa_*, \tau_*)$ in \eqref{cod-1}  
and $\|\B \|_{C^{\kappa_*+1}(\overline{\Omega})}$,
such that
\begin{equation}\label{a-0}
m (x, \beta \B) \le C\,  m(y, \beta \B) \big\{ 1+ d_\beta (x, y) \big\}^{\ell_1}
\end{equation}
for any $x, y\in \overline{\Omega}$ and $\beta> \beta_0$.
\end{thm}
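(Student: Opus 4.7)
The plan is to couple the two estimates in Lemma~\ref{lemma-02} along a near-minimizing path of the Agmon distance, extracting a polynomial (rather than exponential) bound on $m(x,\beta\B)/m(y,\beta\B)$. The upper estimate \eqref{m-a} controls this ratio by a power of $|x-y|\,m(y,\beta\B)$, while the lower estimate \eqref{m-b}, once integrated along a path, forces $d_\beta(x,y)$ to grow like a fractional power of the same Euclidean quantity. Eliminating $|x-y|\,m(y,\beta\B)$ between the two inequalities then produces the desired polynomial dependence on $d_\beta(x,y)$.

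For the local step, I would first assume $x,y$ lie in a common ball $\b(x_0,r_0)\subset\mathcal{O}_{2r_0}$. Take any rectifiable path $\gamma:[0,L]\to\overline{\Omega}$ from $y$ to $x$, parameterized by arc length with $\gamma(0)=y$, and (for now) assume $\gamma$ stays in $\b(x_0,r_0)$. Applying \eqref{m-b} centered at $y$ and using $|\gamma(\sigma)-y|\le\sigma$ yields
\begin{equation*}
m(\gamma(\sigma),\beta\B)\ \ge\ \frac{c\, m(y,\beta\B)}{\{1+\sigma\, m(y,\beta\B)\}^{\ell_0/(\ell_0+1)}}.
\end{equation*}
Integrating $\sigma$ from $0$ to $L$, using $L\ge |x-y|$, and then taking the infimum over such paths gives
\begin{equation*}
d_\beta(x,y)\ \ge\ c_1\left(\{1+|x-y|\, m(y,\beta\B)\}^{1/(\ell_0+1)}-1\right).
\end{equation*}
Combined with the upper bound \eqref{m-a}, namely $m(x,\beta\B)\le C\{1+|x-y|\,m(y,\beta\B)\}^{\ell_0}\,m(y,\beta\B)$, eliminating the Euclidean quantity produces the local polynomial estimate
\begin{equation*}
m(x,\beta\B)\ \le\ C\, m(y,\beta\B)\{1+d_\beta(x,y)\}^{\ell_0(\ell_0+1)}.
\end{equation*}

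To globalize, I would choose a near-minimizing path from $y$ to $x$ in $\overline{\Omega}$ and insert intermediate points $y=z_0,z_1,\dots,z_N=x$ along it such that each consecutive pair lies in a common ball $\b(x_{0,i},r_0)\subset\mathcal{O}_{2r_0}$. Since $\overline{\Omega}$ is bounded, $N$ can be chosen uniformly bounded by some $N_0=N_0(\Omega)$. Applying the local inequality on each link, telescoping, and using $\sum_i d_\beta(z_i,z_{i+1})\le 2\,d_\beta(x,y)$ together with the elementary inequality $\prod_i(1+d_i)\le(1+\sum_i d_i)^N$, one obtains
\begin{equation*}
\frac{m(x,\beta\B)}{m(y,\beta\B)}\ \le\ C^{N}\prod_{i=0}^{N-1}\{1+d_\beta(z_i,z_{i+1})\}^{\ell_0(\ell_0+1)}\ \le\ C'\{1+d_\beta(x,y)\}^{N_0\ell_0(\ell_0+1)},
\end{equation*}
which is \eqref{a-0} with $\ell_1=N_0\,\ell_0(\ell_0+1)$.

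The main obstacle is the restriction ``$\gamma$ stays in $\b(x_0,r_0)$'' in the local step, since a near-minimizing path could in principle exit the ball even when its endpoints lie deep inside. This is circumvented by observing that any such exiting path traverses a Euclidean distance of at least $r_0/2$ inside $\mathcal{O}_{4r_0}$, and on that set \eqref{cod-2} guarantees $\widetilde m(\cdot,\beta\B)\gtrsim\beta^{1/(\kappa_*+2)}$, so the Agmon length of that portion is at least $c\,r_0\,\beta^{1/(\kappa_*+2)}$, which for $\beta>\beta_0$ dominates the total ratio $m(x,\beta\B)/m(y,\beta\B)$ to any polynomial power. Finally, since the Agmon distance \eqref{dist} uses $\widetilde m$ while Lemma~\ref{lemma-02} is stated in terms of $m$, the equivalence \eqref{m-eq} is invoked freely to pass between the two formulations.
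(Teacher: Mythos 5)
Your proposal is essentially correct and follows the same two-stage strategy as the paper: a local estimate for $x,y$ in a common $r_0$-ball, followed by chaining along a near-minimizing curve covered by finitely many balls. Your chaining step is the same as the paper's (the paper uses the crude bound $d_\beta(z_j,z_{j+1})\le d_\beta(x,y)+1$ for each link, where you use $\sum_i d_\beta(z_i,z_{i+1})\le 2d_\beta(x,y)$ plus the product inequality; either works, up to adjusting $C$; note, though, that a near-minimizer only gives $\sum_i d_\beta(z_i,z_{i+1})\le d_\beta(x,y)+\delta$, not $2d_\beta(x,y)$, when $d_\beta(x,y)$ is small, so the paper's form is cleaner). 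Where you diverge is the local step: the paper simply cites \cite[Lemma 4.18]{Shen-1996} after invoking \eqref{m-eq}, whereas you re-derive it by integrating \eqref{m-b} along an arc-length path, solving for $|x-y|m(y,\beta\B)$ in terms of $d_\beta(x,y)$, and substituting into \eqref{m-a}. That computation is correct. You also flag, and resolve, a genuine subtlety that the paper leaves implicit: the Agmon distance in \eqref{dist} is defined over paths in $\overline\Omega$, not over paths confined to the ball where \eqref{m-a}--\eqref{m-b} hold, so a near-minimizer could in principle exit the ball. Your fix — that any exiting competitor traverses Euclidean length $\gtrsim r_0$ in a region where \eqref{cod-2} forces $\widetilde m(\cdot,\beta\B)\gtrsim\beta^{1/(\kappa_*+2)}$, hence has Agmon length $\gtrsim r_0\,\beta^{1/(\kappa_*+2)}$, which after raising to a fixed power $\ell_1\ge\kappa_*/2$ dominates $m(x,\beta\B)/m(y,\beta\B)\le C\beta^{\kappa_*/(2(\kappa_*+2))}$ — is sound for $\beta>\beta_0$. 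So the net effect is that you have unpacked and justified the cited local lemma in the present $\overline\Omega$-constrained setting rather than treating it as a black box; otherwise the argument coincides with the paper's.
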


\begin{proof}

Since $m(x, \beta \B)$ satisfies \eqref{m-a}-\eqref{m-b} for any $x, y\in \b(x_0, r_0)\subset \mathcal{O}_{2r_0}$,
it follows from   \eqref{m-eq} and  \cite[Lemma 4.18]{Shen-1996} that 
the estimate \eqref{a-0} holds for any $x, y\in \overline{\Omega}$
with $|x-y|< r_0$.

To deal with the general case, we cover $\overline{\Omega}$ by a finite number of balls
$\{\b(y_j, r_0/4), j=1, 2, \dots, N\}$, where $y_j\in \overline{\Omega}$ and $N$ depends on $\Omega$ and $r_0$.
Fix $x, y\in \overline{\Omega}$.
Choose a piece-wise $C^1$ curve $\gamma: [0, 1] \to \overline{\Omega}$ such that $\gamma (0)=x$, $\gamma (1)=y$, and 
$$
\int_0^1 \widetilde{m}(\gamma(t), \beta \B) |\gamma^\prime (t)| dt \le   d_\beta (x, y) +1.
$$
Let $\Gamma =\{ \gamma(t): t \in [0, 1] \}$.
Without the loss of generality, we may assume that 
$$
\Gamma \subset \cup_{j=1}^{N_0} \b(y_j, r_0/4),\quad 
\Gamma\cap \b(y_j, r_0/4)\neq \emptyset,
$$
$x\in \b(y_1, r_0/4)$, $y\in \b(y_{N_0}, r_0/4)$,
and $\b(y_j, r_0/4) \cap \b(y_{j+1}, r_0/4) \neq \emptyset$, where $N_0\le N$.
Let $z_j \in \b(y_j, r_0/4) \cap \Gamma$ for $j=2, \dots, N_0-1$, and $z_1=x$, $z_{N_0}=y$.
Since $|z_j -z_{j+1}|<   r_0$, we have 
\begin{equation*}
\aligned
m(z_j , \beta \B) & \le C m(z_{j+1} , \beta \B) \big\{ 1+ d_\beta(z_j , z_{j+1})\big\}^{\ell_1}\\
& \le C m(z_{j+1} , \beta \B)\big \{ 2+ d_\beta(x, y)\big\}^{\ell_1}
\endaligned
\end{equation*}
for $j=1, \dots, N_0-1$.
It follows that
\begin{equation*}
m(x, \beta \B)
\le C^{N_0} m(y, \beta \B) \{ 2+ d_\beta(x, y) \}^{\ell_1 N_0}.
\end{equation*}
This gives \eqref{a-0} for any $x, y\in \overline{\Omega}$,
 as $N_0$ depends only on $\Omega$ and $r_0$.
\end{proof}


\section{$L^2$ bounds}

For $\lambda>0$ and $\beta>\beta_0$, define
\begin{equation}\label{E}
W_\beta (\lambda)=\big\{ x\in \partial\Omega: \ \widetilde{m}(x, \beta\B)< \lambda \big\}
\end{equation}
and
\begin{equation}\label{d}
d_{\beta, \lambda} (x)=d_\beta (x, W_\beta (\lambda))
=\inf \big\{ d_\beta (x, y): y \in W_\beta (\lambda) \big\},
\end{equation}
where $d_\beta (x, y)$ is defined by \eqref{dist}.
It follows from \eqref{a-0} and \eqref{m-eq} that for $\beta>\beta_0$, 
\begin{equation}\label{a-3}
\widetilde{m}(x, \beta \B)
\le C \lambda \{ 1+ d_{\beta, \lambda} (x) \}^{\ell_1}
\end{equation}
for any $x\in \overline{\Omega}$.

We need to regularize the function $d_{\beta, \lambda} (x)$.
 Using the properties \eqref{m-a}-\eqref{m-b}, we may find a finite number of balls 
$\{ \b(z_j,  t_j): j=1, 2, \dots, J \}$ with $z_j \in \overline{\Omega} $ and $t_j =(1/2){m}(z_j, \beta\B)^{-1}$ such that 
$$
1\le \sum_j \chi_{\b(z_j, t_j)}
\le \sum_j \chi_{\b(z_j, 2t_j)} \le C \quad \text{ in } \overline{\Omega},
$$
where $C$ depends only on $\Omega$ and $(\kappa_*, \tau_*)$ in \eqref{cod-1}.
This allows us to construct a partition  of unity  $\{ \varphi_j\}$ for $\overline{\Omega}$ such that 
$\varphi_j \in C_0^\infty(\b(z_j, 2t_j); \R)$, $\sum_j \varphi_j =1$ in $\overline{\Omega}$, and 
$$
0\le \varphi_j \le 1, \quad
|\nabla \varphi_j |\le C t_j^{-1},\quad
|\nabla^2 \varphi_j|\le C t_j^{-2}.
$$
Define
\begin{equation}\label{psi}
\psi_{\beta, \lambda} (x)
= \sum_j  d_{\beta, \lambda} (z_j ) \varphi_j (x).
\end{equation}
Clearly, $\psi_{\beta, \lambda} \in C_0^\infty(\R^d; \R)$.

\begin{lemma}\label{lemma-a2}
Let $\beta> \beta_0$ and  $\psi_{\beta, \lambda}$ be defined by \eqref{psi}. Then

\begin{enumerate}

\item

For $x\in \overline{\Omega}$, $|\psi_{\beta, \lambda}(x)-  d_{\beta, \lambda} (x)|\le C $.

\item

For $x\in \overline{\Omega}$, we have 
\begin{equation}\label{psi-1}
|\nabla \psi_{\beta, \lambda}  (x)|\le C m(x, \beta \B) \quad \text{ and } \quad
|\nabla^2 \psi _{\beta, \lambda}(x)|\le C m(x, \beta \B)^2.
\end{equation}

\end{enumerate}
\end{lemma}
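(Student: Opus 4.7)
The plan is to exploit three ingredients: the partition-of-unity identity $\sum_j \varphi_j \equiv 1$ (together with its derivative consequences $\sum_j \nabla^k\varphi_j \equiv 0$ for $k=1,2$), the local comparability $m(\cdot,\beta\B) \approx m(z_j,\beta\B)$ on $\b(z_j,2t_j)$ supplied by Lemma \ref{lemma-02}, and the bounded-overlap property of the cover.

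For part (1), since $\sum_j \varphi_j(x) = 1$, I would write
\[
\psi_{\beta,\lambda}(x) - d_{\beta,\lambda}(x) = \sum_j \bigl[d_{\beta,\lambda}(z_j) - d_{\beta,\lambda}(x)\bigr]\,\varphi_j(x).
\]
Only indices $j$ with $x \in \b(z_j,2t_j)$ contribute; for each such $j$ the triangle inequality for the Agmon distance gives $|d_{\beta,\lambda}(z_j) - d_{\beta,\lambda}(x)| \le d_\beta(x,z_j)$, so it suffices to establish $d_\beta(x,z_j) \le C$ whenever $x,z_j\in\overline\Omega$ and $|x-z_j| \le 2t_j$. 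Since $2t_j = m(z_j,\beta\B)^{-1} < r_0$, the quasiconvexity of the Lipschitz domain $\Omega$ yields a rectifiable path $\gamma\subset\overline\Omega$ from $x$ to $z_j$ of length $\le C|x-z_j|$, staying in $\b(z_j,Cr_0)\cap\overline\Omega$. Along this path every $\xi$ satisfies $|\xi-z_j|\,m(z_j,\beta\B)\le C$, so \eqref{m-a} gives $m(\xi,\beta\B)\le C\,m(z_j,\beta\B)$ uniformly; combining with \eqref{m-eq},
\[
d_\beta(x,z_j)\le \int \widetilde m(\gamma(t),\beta\B)\,|\gamma'(t)|\,dt \le C\,m(z_j,\beta\B)\cdot 2t_j = C.
\]

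For part (2), the identities $\sum_j \nabla^k\varphi_j\equiv 0$ allow the same telescoping trick,
\[
\nabla^k \psi_{\beta,\lambda}(x) = \sum_j \bigl[d_{\beta,\lambda}(z_j) - d_{\beta,\lambda}(x)\bigr]\,\nabla^k \varphi_j(x), \qquad k=1,2.
\]
Combining the bound $|d_{\beta,\lambda}(z_j) - d_{\beta,\lambda}(x)| \le C$ established above, the derivative estimates $|\nabla^k\varphi_j|\le C t_j^{-k}$ from the construction, the two-sided equivalence $t_j^{-1}\approx m(z_j,\beta\B)\approx m(x,\beta\B)$ on $\b(z_j,2t_j)$ (a second use of \eqref{m-a}-\eqref{m-b} with $|x-z_j|\,m(z_j,\beta\B)\le 1$), and the fact that at most $C$ indices contribute at each $x$, yields
\[
|\nabla\psi_{\beta,\lambda}(x)|\le C\,m(x,\beta\B),\qquad |\nabla^2\psi_{\beta,\lambda}(x)|\le C\,m(x,\beta\B)^2.
\]

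The main technical point is producing, inside $\overline\Omega$, a path from $x$ to $z_j$ of length comparable to $|x-z_j|$ along which $m(\cdot,\beta\B)$ is controlled: $\ell^\infty$-control of $m$ comes from Lemma \ref{lemma-02}, while the path itself requires the Lipschitz-domain structure (together with $2t_j < r_0$, so we stay in the local graph chart). Once this quantitative comparison is in hand, both assertions reduce to routine partition-of-unity bookkeeping.
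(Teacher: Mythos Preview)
Your proof is correct and follows essentially the same route as the paper: both arguments use $\sum_j\varphi_j\equiv1$ (and its differentiated versions) to telescope, invoke the triangle inequality $|d_{\beta,\lambda}(z_j)-d_{\beta,\lambda}(\cdot)|\le d_\beta(z_j,\cdot)$, and finish with the local comparability of $m(\cdot,\beta\B)$ on the balls $\b(z_j,2t_j)$ together with bounded overlap. The only cosmetic difference is that in part~(2) you subtract the value $d_{\beta,\lambda}(x)$ whereas the paper subtracts $d_{\beta,\lambda}(z_k)$ for a fixed center $z_k$ with $x\in\b(z_k,t_k)$; both exploit $\sum_j\nabla^k\varphi_j=0$ in the same way, and your added justification of the bound $d_\beta(x,z_j)\le C$ via a short path in $\overline\Omega$ is a welcome elaboration of what the paper states without proof.
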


\begin{proof}

To see (1),  note that for $x\in \overline{\Omega}$, 
$$
\psi_{\beta, \lambda} (x) - d_{\beta, \lambda} (x)=\sum_j ( d_{\beta, \lambda} (z_j)-d_{\beta, \lambda} (x)) \varphi_j (x),
$$
where we have used the fact $\small\sum_j \varphi_j=1$ on $\overline{\Omega}$.
It follows that
$$
|\psi_{\beta, \lambda} (x) - d_{\beta, \lambda} (x)|\le
\sum_{j} d_\beta   (z_j , x) \varphi_j (x).
$$
Since $d_\beta (x, y)\le C$ if $|x-y|< C{m}(x, \beta \B)^{-1}$, this yields  $|\psi_{\beta, \lambda} (x)-d_{\beta, \lambda} (x) |\le C$.

To prove (2), we use the observation that
$$
\nabla \psi_{\beta, \lambda} (x)
=\sum_j d_{\beta, \lambda} (z_j) \nabla \varphi_j (x)
=\sum_j (d_{\beta, \lambda} (z_j) -d_{\beta, \lambda} (z_k) ) \nabla \varphi_j (x),
$$
where $z_k$ is chosen so that $x\in \b(z_k, t_k)$.
Thus,
$$
\aligned
|\nabla \psi_{\beta, \lambda} (x)|
 & \le \sum_j d_{\beta} (z_j, z_k) |\nabla \varphi_j (x)|\\
&\le C \sum_j |\nabla \varphi_j (x)|,
\endaligned
$$
where we have used the fact $|z_j -z_k|\le C m(z_k, \beta\B)^{-1}$ if $x\in \b(z_j, 2t_j)$.
Since $|\nabla \varphi_j (x)|\le C t_j^{-1}\le C  m(x, \beta\B)$  for $x\in \b(z_j, 2t_j)$, 
this leads to $|\nabla \psi_{\beta, \lambda} (x)|\le C m(x, \beta\B)$.
The second inequality in \eqref{psi-1} follows in the same manner.
\end{proof}

\begin{thm}\label{theorem-m}
Suppose that \eqref{cod-1} holds for any $x\in \overline{\Omega}$. 
Let $\beta> \beta_0$.
Then
\begin{align}
\int_{\partial\Omega}
m(x, \beta \B) |\psi|^2 \, dx 
 & \le C \int_\Omega |(D+\beta\A)\psi|^2\, dx\label{m-m1}, \\
 \int_{\Omega}
m(x, \beta \B)^2 |\psi|^2 \, dx 
 & \le C \int_\Omega |(D+\beta\A)\psi|^2\, dx\label{m-m2}
\end{align}
for any $\psi \in H^1(\Omega; \C)$, 
where $C$ depends on $\Omega$, $(\kappa_*, \tau_*)$ in \eqref{cod-1} and $\| \B\|_{C^{\kappa_*+1}(\overline{\Omega})}$.
\end{thm}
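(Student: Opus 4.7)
The plan is to reduce both inequalities to uniform local estimates on the covering balls $\b(z_j, 2t_j)$ supporting the partition of unity $\{\varphi_j\}$ from Section 2, and then sum. Throughout, write $m_j = m(z_j, \beta\B)$ and recall that on each such ball one has $\sup |\beta\B| \le 4 t_j^{-2} = 16 m_j^2$ by the very definition of $m$. This is precisely the regime in which a Poincar\'e-type inequality for $(D+\beta\A)$ should be available: the magnetic field, while large in absolute terms, is comparable to the natural scale $t_j^{-2}$ of the ball and therefore cannot swamp the kinetic term after an appropriate gauge fixing.

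On a fixed ball $\b(z_j, 2t_j)$, I would first perform a local gauge change. By Poincar\'e's lemma one can write $\beta\A = \nabla(\beta f_j) + \beta\widetilde\A_j$ on the ball with $\nabla \times \widetilde\A_j = \B$ and $\widetilde\A_j(z_j)=0$, so that $|\beta\widetilde\A_j(x)| \le C|x-z_j|\sup|\beta\B| \le C m_j$. Setting $v = e^{i\beta f_j}\psi$, the gauge identity $(D+\beta\A)\psi = e^{-i\beta f_j}(D+\beta\widetilde\A_j)v$ reduces both estimates to controlling $m_j^2\int_{\Omega\cap\b(z_j,2t_j)}|v|^2$ and $m_j\int_{\partial\Omega\cap\b(z_j,2t_j)}|v|^2$ in terms of $\int_{\Omega\cap\b(z_j,2t_j)}|(D+\beta\widetilde\A_j)v|^2$. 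After the rescaling $x = z_j + t_j y$, the rescaled potential becomes of order $O(1)$ and the weights become $1$, so it suffices to prove the model estimate
\begin{equation*}
\int_{\Omega^*}|w|^2 + \int_{\partial\Omega^*}|w|^2 \le C\int_{\Omega^*}|Dw|^2 + C\int_{\Omega^*}|w|^2
\end{equation*}
on the rescaled Lipschitz domain $\Omega^* \subset \b(0,2)$, combined with the elementary bound $|(D+\widetilde\A^*)w|^2 \ge \frac12|Dw|^2 - C|w|^2$. The interior part of the model estimate is a standard Poincar\'e inequality applied to $\varphi_j \psi$, which vanishes outside the ball, and the boundary part is the standard trace inequality on a Lipschitz graph of uniformly bounded constant. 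Choosing the constant in $t_j = \frac12 m_j^{-1}$ small enough allows the $|w|^2$ errors to be absorbed into the left-hand side.

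The last step is to apply the localized inequalities to $v_j = \varphi_j\psi$ and sum over $j$, using $\sum_j\varphi_j = 1$ and the finite overlap of $\{\b(z_j,2t_j)\}$. The commutator $[(D+\beta\A),\varphi_j] = D\varphi_j$ satisfies $|D\varphi_j| \le Ct_j^{-1} \le C m(x,\beta\B)$ on its support, so the cross terms generated by localization produce only integrals of the form $\int m(x,\beta\B)^2|\psi|^2$, which are absorbed by choosing the overlap constants appropriately; the slow-variation estimates \eqref{m-a}--\eqref{m-b} ensure that $m_j$ and $m(x,\beta\B)$ are comparable on $\b(z_j,2t_j)$. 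The main obstacle in this program is the boundary inequality \eqref{m-m1}: it is where the Lipschitz geometry of $\partial\Omega$ enters in an essential way, through the need for a \emph{uniform} trace inequality on the family of rescaled Lipschitz domains $\Omega^*$. The normalization chosen in Section 2, that $\b(x_0, 4r_0)\cap\Omega$ lies above a single Lipschitz graph with constants independent of $x_0$, is precisely what guarantees this uniformity. The interior estimate \eqref{m-m2} is the easier of the two and has the flavor of a Fefferman--Phong uncertainty principle for the magnetic Laplacian.
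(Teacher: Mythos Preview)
The paper itself does not prove this theorem; it simply quotes \cite[Theorem~3.8]{Shen-2025}. So your proposal is an attempt to reconstruct that argument, which is reasonable, but the sketch has a genuine gap at the absorption step.

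The problem is in the final paragraph. After applying your local estimate to $v_j=\varphi_j\psi$, the commutator $D\varphi_j$ indeed satisfies $|D\varphi_j|\le Ct_j^{-1}\approx m(x,\beta\B)$, so the error you generate is $\sum_j\int |D\varphi_j|^2|\psi|^2\approx \int m(x,\beta\B)^2|\psi|^2$. But the constant in front of this error is fixed by the overlap bound and Cauchy--Schwarz, and the constant on the left of your local estimate is also fixed. Replacing $t_j=\tfrac12 m_j^{-1}$ by $t_j=\epsilon m_j^{-1}$ does shrink the rescaled gauge potential, but it simultaneously increases $|D\varphi_j|$ by a factor $\epsilon^{-1}$ and shrinks the Poincar\'e constant by $\epsilon^{2}$; these effects cancel exactly, and the localization error survives with an $O(1)$ coefficient that cannot be absorbed into the left-hand side. ``Choosing the overlap constants appropriately'' is not an available degree of freedom.

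The standard cure, and presumably what is done in \cite{Shen-2025} following \cite{Shen-1995, Shen-1996}, is to prove the local inequality
\[
m_j^2\int_{\b(z_j,2t_j)\cap\Omega}|\psi|^2\le C\int_{\b(z_j,2t_j)\cap\Omega}|(D+\beta\A)\psi|^2
\]
for an \emph{arbitrary} $\psi\in H^1$, not only for the cut-off function $\varphi_j\psi$. This cannot come from Poincar\'e and a gauge change alone (on a ball where the rescaled field is $O(\epsilon^2)$, the operator is essentially the free Laplacian and no such bound holds for general $\psi$); it genuinely uses the magnetic field through the commutators $[(D+\beta\A)_j,(D+\beta\A)_k]=-i\beta B_{jk}$, iterated to reach the derivatives $\partial^\alpha\B$ in the finite-type condition \eqref{cod-1}. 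With that local estimate in hand, one sums over the covering \emph{without} inserting $\varphi_j$, and finite overlap gives \eqref{m-m2} directly; \eqref{m-m1} then follows by combining with a uniform local trace inequality, exactly as you describe.
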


\begin{proof}
This  was proved in \cite[Theorem 3.8]{Shen-2025}.
\end{proof}

\begin{lemma}\label{lemma-L}
Let $u\in H^1(\Omega; \C)$ be a weak solution of the Neumann problem,
\begin{equation}\label{NP}
(D+\A)^2 u=0 \quad \text{ in } \Omega \quad \text{ and } \quad
n \cdot (D+\A) u=g \quad \text{ on } \partial\Omega,
\end{equation}
where $g\in L^2(\partial\Omega; \C)$. Then
\begin{equation}\label{we1}
\aligned
\int_\Omega |(D+\A) (ue^{\e \psi})|^2
& \le 6\e^2 \int_\Omega  e^{2\e \psi} |u|^2 |\nabla \psi|^2
+ 2\e \int_\Omega e^{2\e \psi} |u|^2  |\Delta \psi|\\
 &\qquad
 +2 \int_{\partial\Omega}  e^{2\e \psi} |u| |g|
+2\e \int_{\partial\Omega} e^{2\e \psi} |u|^2 |\nabla \psi|, 
\endaligned
\end{equation}
where $\e>0$ and $\psi\in C^2(\R^d, \R)$.
\end{lemma}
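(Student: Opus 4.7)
The plan is to split $|(D+\A)(ue^{\e\psi})|^2$ into two pieces by the elementary inequality $|a-b|^2\le 2|a|^2+2|b|^2$, and then bound the remaining $\int_\Omega e^{2\e\psi}|(D+\A)u|^2$ by testing the Neumann problem \eqref{NP} against the weight $w=ue^{2\e\psi}$.

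First, the product rule for $D=-i\nabla$ gives
$$
(D+\A)(ue^{\e\psi}) = e^{\e\psi}\bigl[(D+\A)u - i\e u\nabla\psi\bigr],
$$
so
$$
\int_\Omega|(D+\A)(ue^{\e\psi})|^2\,dx \;\le\; 2\!\int_\Omega e^{2\e\psi}|(D+\A)u|^2\,dx + 2\e^2\!\int_\Omega e^{2\e\psi}|u|^2|\nabla\psi|^2\,dx.
$$
The second term on the right already supplies $2\e^2$ of the $|\nabla\psi|^2$-coefficient in \eqref{we1}, so the remaining task is to estimate $\int_\Omega e^{2\e\psi}|(D+\A)u|^2$.

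Second, I test the weak formulation of \eqref{NP} against $w=ue^{2\e\psi}$. The same product rule gives $(D+\A)w = e^{2\e\psi}[(D+\A)u - 2i\e u\nabla\psi]$, so expanding $\int(D+\A)u\cdot\overline{(D+\A)w}$ and taking real parts (the quantity $\int e^{2\e\psi}|(D+\A)u|^2$ is real-valued) leads to an identity of the form
$$
\int_\Omega e^{2\e\psi}|(D+\A)u|^2\,dx = -\e\!\int_\Omega e^{2\e\psi}\nabla|u|^2\cdot\nabla\psi\,dx + R,
$$
with $|R|\le \int_{\partial\Omega}e^{2\e\psi}|u||g|\,d\sigma$. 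Here I have used the identity $\mathrm{Im}(\overline{u}(D+\A)u)=-\tfrac{1}{2}\nabla|u|^2$ --- the $|u|^2\A$-contribution is real and drops from the imaginary part, so the potential $\A$ cancels out of the cross term. A further integration by parts on the volume term, with $\mathrm{div}(e^{2\e\psi}\nabla\psi)=e^{2\e\psi}(2\e|\nabla\psi|^2+\Delta\psi)$, produces $2\e^2\int e^{2\e\psi}|u|^2|\nabla\psi|^2$, $\e\int e^{2\e\psi}|u|^2\Delta\psi$, and a boundary piece $-\e\int_{\partial\Omega}e^{2\e\psi}|u|^2\partial_n\psi$. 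Passing to absolute values on all signed terms gives the bound
$$
\int_\Omega e^{2\e\psi}|(D+\A)u|^2 \;\le\; 2\e^2\!\int_\Omega e^{2\e\psi}|u|^2|\nabla\psi|^2 + \e\!\int_\Omega e^{2\e\psi}|u|^2|\Delta\psi| + \e\!\int_{\partial\Omega}e^{2\e\psi}|u|^2|\nabla\psi| + \int_{\partial\Omega}e^{2\e\psi}|u||g|.
$$

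Substituting into the first-step bound and collecting coefficients produces $(4+2)\e^2=6\e^2$, $2\e$, $2\e$ and $2$ in front of the four terms on the right-hand side of \eqref{we1}, which is the claim. The only subtle point is the systematic bookkeeping of complex conjugates and the imaginary unit in the mixed products: the identity $\mathrm{Im}(\overline{u}(D+\A)u)=-\tfrac{1}{2}\nabla|u|^2$ is what makes the real magnetic potential $\A$ drop out of every cross term, leaving just a real gradient of $|u|^2$ to be integrated by parts --- once this is in place, the remainder is routine.
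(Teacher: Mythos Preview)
Your proof is correct and yields exactly the stated constants. The route, however, differs from the paper's. The paper expands $(D+\A)^2(ue^{\e\psi})$ explicitly, writes
\[
\int_\Omega |(D+\A)(ue^{\e\psi})|^2
=\int_\Omega (D+\A)^2(ue^{\e\psi})\cdot \overline{u}e^{\e\psi}
+ i\int_{\partial\Omega} n\cdot(D+\A)(ue^{\e\psi})\cdot \overline{u}e^{\e\psi},
\]
and then applies the Cauchy inequality to the single cross term $2\e\int_\Omega (D+\A)(ue^{\e\psi})\cdot D\psi\cdot \overline{u}e^{\e\psi}$, absorbing half of the left-hand side and multiplying through by $2$. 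You instead split $|(D+\A)(ue^{\e\psi})|^2$ via $|a-b|^2\le 2|a|^2+2|b|^2$, reduce to bounding $\int_\Omega e^{2\e\psi}|(D+\A)u|^2$, and obtain that by testing the weak formulation against $ue^{2\e\psi}$ together with the identity $\mathrm{Im}\bigl(\overline{u}(D+\A)u\bigr)=-\tfrac12\nabla|u|^2$ and one further integration by parts. Your approach avoids computing the second-order operator on the weighted function and makes transparent why $\A$ drops from the cross term; the paper's is the more classical Agmon commutator computation. Both are elementary and lead to the identical numerical constants $6\e^2,\,2\e,\,2,\,2\e$.
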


\begin{proof}

A computation shows that
$$
(D+\A)(ue^{\e \psi})= e^{\e\psi} (D+\A) u + u De^{\e\psi}
$$ 
and
$$
(D+\A)^2 (u e^{\e\psi})
= e^{\e \psi} (D+\A)^2 u
+ 2 (D+\A) u \cdot De^{\e \psi}
+ u D^2 e^{\e \psi}.
$$
It follows that if $(D+\A)^2 u=0$ in $\Omega$, then
$$
\aligned
(D+\A)^2 (ue^{\e \psi})
&= 2 (D+\A) ( u e^{\e\psi}) \cdot \e D \psi
+ u e^{\e\psi}
\left\{ -\e^2 D\psi \cdot D\psi + \e D^2 \psi\right \}.
\endaligned
$$
Using integration by parts, we see that
$$
\aligned
\int_\Omega |(D+\A)(ue^{\e\psi})|^2
&=\int_\Omega (D+\A)^2 (ue^{\e \psi})
\cdot \overline{u} e^{\e\psi}
+i \int_{\partial \Omega}
n\cdot (D+\A) (ue^{\e \psi}) \cdot \overline{u} e^{\e \psi}\\
&=2\e \int_\Omega
(D+\A) (e^{\e\psi} u)
\cdot D \psi \cdot \overline{u} e^{\e\psi}\\
 & \quad +\int_\Omega
e^{2\e \psi} |u|^2 \left\{ -\e^2 D\psi \cdot D\psi + \e D^2 \psi\right \}\\
 & \quad +i \int_{\partial\Omega} g \overline{u} e^{2\e \psi}
+ i \e \int_{\partial\Omega}
n\cdot D\psi |u|^2 e^{2\e \psi}. 
\endaligned
$$
The inequality \eqref{we1} now follows by applying the Cauchy inequality to the first term in the right-hand side of
the equality above.
Note that we have used the integration by parts in a Lipschitz domain $\Omega$.
This can justified by approximating $\Omega$  from inside by a sequence of smooth domains 
with uniform Lipschitz characters.
\end{proof}

\begin{thm}\label{L-2-thm}
Let $\Omega$ be a bounded Lipschitz domain in $\R^d$, $d\ge 2$ and $\A\in C^{\kappa_*+1} (\overline{\Omega}; \R^d)$.
Suppose $\B$ satisfies the condition \eqref{cod-1} for any $x\in \overline{\Omega}$.
Let $u\in H^1(\Omega; \C)$ be a weak solution of 
\begin{equation}\label{EF-1}
\left\{
\aligned
(D+\beta\A )^2 u  & =0& \quad & \text{ in } \Omega, \\
n \cdot (D+\beta \A)u & =\lambda u & \quad & \text{ on } \partial\Omega
\endaligned
\right.
\end{equation}
for some $\lambda>0$. Then, for $\beta> \beta_0$, 
\begin{equation}\label{L-2}
\int_{\partial\Omega} 
e^{2\e  d_{\beta, C\lambda} (x)  } |u (x)|^2 \, dx \le C \int_{\partial\Omega} |u|^2, 
\end{equation}
\begin{equation}\label{L-3}
\int_{\Omega} 
e^{2\e  d_{\beta, C\lambda} (x)  } m(x, \beta \B)^2  |u (x)|^2 \, dx \le C \lambda \int_{\partial\Omega} |u|^2, 
\end{equation}
\begin{equation}\label{L-3a}
\int_{\Omega} 
e^{2\e  d_{\beta, C\lambda} (x)  }   |(D+\beta \A) (x)|^2 \, dx \le C \lambda \int_{\partial\Omega} |u|^2, 
\end{equation}
where $d_{\beta, C \lambda} (x)$ is defined by \eqref{d}.
The constants $\e$, $\beta_0$ and $C>0$ depend on $\Omega$, $(\kappa_*, \tau_*)$ in \eqref{cod-1} 
and $\| \B\|_{C^{\kappa_0+1}(\overline{\Omega})}$.
\end{thm}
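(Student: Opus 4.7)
The strategy is to carry out a weighted energy (Agmon) estimate with the regularized distance weight $\psi_{\beta,C\lambda}$ from Lemma \ref{lemma-a2}, using the energy identity of Lemma \ref{lemma-L} on one side and the Hardy-type estimates of Theorem \ref{theorem-m} on the other. The constant $C$ in the level set $W_\beta(C\lambda)$ must be chosen large enough so that $m(x,\beta\B) \ge 2\lambda$ (say) off $W_\beta(C\lambda)$; this is where the boundary datum $\lambda u$ will be controlled by the LHS of \eqref{m-m1}. Throughout, $\widetilde{m}$ and $m$ are interchangeable thanks to \eqref{m-eq}, and $e^{2\e\psi_{\beta,C\lambda}}$ is comparable to $e^{2\e d_{\beta,C\lambda}}$ by part (1) of Lemma \ref{lemma-a2}.

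The first step is to apply Lemma \ref{lemma-L} to $(D+\beta\A)^2u=0$ with Neumann data $g=\lambda u$ and with $\psi=\psi_{\beta,C\lambda}$. Using the pointwise bounds $|\nabla\psi|\le Cm(\cdot,\beta\B)$ and $|\Delta\psi|\le|\nabla^2\psi|\le Cm(\cdot,\beta\B)^2$ from \eqref{psi-1}, the inequality \eqref{we1} becomes
\begin{equation*}
\int_\Omega \bigl|(D+\beta\A)(ue^{\e\psi})\bigr|^2
\le C\e\!\int_\Omega e^{2\e\psi}|u|^2 m^2
+C\e\!\int_{\partial\Omega}e^{2\e\psi}|u|^2 m
+2\lambda\!\int_{\partial\Omega}e^{2\e\psi}|u|^2.
\end{equation*}
Now I apply Theorem \ref{theorem-m} with test function $ue^{\e\psi}\in H^1(\Omega;\C)$ to bound the LHS from below by $c\int_\Omega m^2 e^{2\e\psi}|u|^2+c\int_{\partial\Omega} m\, e^{2\e\psi}|u|^2$. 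Choosing $\e$ sufficiently small (depending only on the constants in Lemma \ref{lemma-a2} and Theorem \ref{theorem-m}), the two $C\e$-terms on the right are absorbed into the LHS.

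The remaining obstacle is the boundary term $2\lambda\int_{\partial\Omega}e^{2\e\psi}|u|^2$, which is where the choice of the level $C\lambda$ enters. I split the boundary as $W=W_\beta(C\lambda)$ and its complement. On $\partial\Omega\setminus W$ we have $\widetilde{m}(x,\beta\B)>C\lambda$, so choosing $C$ large guarantees $2\lambda \le \tfrac{c}{2}m(x,\beta\B)$ there, and this piece is absorbed into $c\int_{\partial\Omega} m\, e^{2\e\psi}|u|^2$ on the LHS. On $W$, the distance $d_{\beta,C\lambda}$ vanishes and $\psi_{\beta,C\lambda}$ is therefore bounded by a universal constant, so this contribution is $\le C\lambda\int_{\partial\Omega}|u|^2$. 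Altogether,
\begin{equation*}
\int_\Omega m(x,\beta\B)^2 e^{2\e\psi}|u|^2
+\int_{\partial\Omega} m(x,\beta\B)\, e^{2\e\psi}|u|^2
\le C\lambda\int_{\partial\Omega}|u|^2,
\end{equation*}
which, after replacing $\psi$ by $d_{\beta,C\lambda}$ at the cost of a multiplicative constant, yields \eqref{L-3}.

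To obtain \eqref{L-2}, split $\partial\Omega=W\cup(\partial\Omega\setminus W)$ once more. On $W$ the weight $e^{2\e d_{\beta,C\lambda}}$ equals $1$, and on the complement $m(x,\beta\B)\ge c\lambda$, so the $\partial\Omega$-part of the displayed inequality above gives $\int_{\partial\Omega\setminus W}e^{2\e d_{\beta,C\lambda}}|u|^2\le\lambda^{-1}C\int_{\partial\Omega\setminus W}m\, e^{2\e\psi}|u|^2\le C\int_{\partial\Omega}|u|^2$, as required. Finally, \eqref{L-3a} follows from the product rule identity $(D+\beta\A)(ue^{\e\psi})=e^{\e\psi}(D+\beta\A)u-i\e e^{\e\psi}u\nabla\psi$, which yields the pointwise bound $e^{2\e\psi}|(D+\beta\A)u|^2\le 2|(D+\beta\A)(ue^{\e\psi})|^2+C\e^2 m^2 e^{2\e\psi}|u|^2$; integrating and using both the absorbed estimate for $\int_\Omega|(D+\beta\A)(ue^{\e\psi})|^2$ (which is $\le C\lambda\int_{\partial\Omega}|u|^2$) and \eqref{L-3} closes the argument. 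The integration by parts in the Lipschitz domain in Lemma \ref{lemma-L} is justified by approximation from inside as noted there.
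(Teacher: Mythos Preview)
Your proof is correct and follows essentially the same approach as the paper's: apply Lemma~\ref{lemma-L} with $\psi=\psi_{\beta,C\lambda}$, bound the right-hand side using $|\nabla\psi|\le Cm$ and $|\nabla^2\psi|\le Cm^2$, use Theorem~\ref{theorem-m} on $ue^{\e\psi}$ to bound the left-hand side from below, absorb the $O(\e)$ terms by choosing $\e$ small, and then split $\partial\Omega$ into $W_\beta(C\lambda)$ and its complement to handle the remaining $\lambda$-term. The only cosmetic difference is the order: the paper first derives the intermediate inequality \eqref{L-2-1} with right-hand side $C\lambda\int_{\partial\Omega}e^{2\e\psi}|u|^2$ and then chooses the level $T$ to close via the boundary split, whereas you perform the split immediately; since the constants in Lemma~\ref{lemma-a2} and Theorem~\ref{theorem-m} are independent of the level, your ordering is equally valid.
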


\begin{proof}

Let $\psi(x)=\psi_{\beta, T\lambda}(x)$ be given by \eqref{psi}, where $T>1$ is to be determined.
It follows by Theorem \ref{theorem-m} that
$$
\int_{\partial\Omega} m (x, \beta\B) |e^{\e \psi} u|^2
+\int_\Omega m(x, \beta \B)^2 |e^{\e \psi} u|^2
\le C \int_\Omega | (D+\beta \A) (e^{\e \psi} u)|^2
$$
for any $\e>0$.
This, together with \eqref{we1}, leads to 
$$
\aligned
  \int_{\partial\Omega}  & m (x, \beta\B) |e^{\e \psi} u|^2
+\int_\Omega m(x, \beta \B)^2 |e^{\e \psi} u|^2
+\int_\Omega |e^{\e \psi} (D+\beta \A)u|^2\\
&\le  C \int_\Omega | (D+\beta \A) (e^{\e \psi} u)|^2
+ C\e^2 \int_\Omega  e^{2\e \psi} |u|^2 |\nabla \psi|^2\\
&  \le C\e^2 \int_\Omega  e^{2\e \psi} |u|^2 |\nabla \psi|^2
+ C\e \int_\Omega e^{2\e \psi} |u|^2  |\Delta \psi|\\
 &\qquad
 +C\lambda   \int_{\partial\Omega}  e^{2\e \psi} |u|^2
+C \e \int_{\partial\Omega} e^{2\e \psi} |u|^2 |\nabla \psi|.
\endaligned
$$
Using $|\nabla \psi (x) |\le C m(x, \beta \B)$ and $|\nabla^2 \psi (x)|\le C m(x, \beta \B)^2$, we obtain 
$$
\aligned
  \int_{\partial\Omega}  & m (x, \beta\B) |e^{\e \psi} u|^2
  +\int_\Omega m(x, \beta \B)^2 |e^{\e \psi} u|^2
  +\int_\Omega |e^{\e \psi} (D+\beta \A)u|^2\\
& \le C (\e^2 +\e) 
\int_\Omega m(x, \beta \B)^2 |e^{\e \psi} u|^2
+C\lambda   \int_{\partial\Omega}  e^{2\e \psi} |u|^2
+C \e  \int_{\partial\Omega}  & m (x, \beta\B) |e^{\e \psi} u|^2.
\endaligned
$$
By choosing $\e>0$ sufficiently small, we deduce that
\begin{equation}\label{L-2-1}
  \int_{\partial\Omega}   m (x, \beta\B) |e^{\e \psi} u|^2
  +\int_\Omega m(x, \beta \B)^2 |e^{\e \psi} u|^2
  +\int_\Omega |e^{\e \psi} (D+\beta \A)u|^2
  \le 
C\lambda   \int_{\partial\Omega}  e^{2\e \psi} |u|^2.
\end{equation}

Finally, note that $\psi=0$ on $W_\beta (T\lambda)$ and $m(x, \beta \B)\ge T\lambda$ on $\partial\Omega\setminus W_\beta (T\lambda)$. It follows that 
$$
\aligned
\int_{\partial\Omega} e^{2\e \psi} |u|^2
& =\int_{W_\beta  (T\lambda) }e^{2\e\psi} |u|^2
+\int_{\partial\Omega\setminus W_\beta (T\lambda)} e^{2\e \psi} |u|^2\\
& \le  \int_{\partial\Omega} |u|^2
+ \frac{C}{T\lambda}
\int_{\partial\Omega} m(x, \beta\B) e^{2\e \psi} |u|^2\\
&\le  \int_{\partial\Omega} |u|^2
+ \frac{C}{T} \int_{\partial\Omega} e^{2\e \psi} |u|^2,
\endaligned
$$
where we have used \eqref{L-2-1} for the last inequality.
By choosing $T=(1/2) C$, we obtain 
$$
\int_{\partial\Omega} e^{2\e \psi} |u|^2 \le 2 \int_{\partial\Omega} |u|^2.
$$
In view of part (1) of Lemma \ref{lemma-a2}, we have proved \eqref{L-2}.
The estimates \eqref{L-3}-\eqref{L-3a} follow readily from \eqref{L-2-1} and \eqref{L-2}.
\end{proof}


\section{Pointwise bounds}

Throughout this section we assume that $\Omega$ is a bounded Lipschitz domain.

\begin{lemma}\label{lemma-I}
Suppose $(D+\A)^2 u=0$ in $\b(x_0, r)$. Then
\begin{equation}\label{I-1}
|u(x_0)|\le \fint_{\b(x_0, r)} |u|.
\end{equation}
\end{lemma}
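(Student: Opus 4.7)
The plan is to deduce the mean-value inequality from the subharmonicity of $|u|$, which follows from Kato's inequality (equivalently the diamagnetic inequality) applied to the magnetic Laplacian. The operator $(D+\A)^2$ is elliptic with smooth coefficients (since $\A$ is smooth), so any weak solution $u$ of $(D+\A)^2 u=0$ in $\b(x_0,r)$ is automatically smooth inside the ball by standard elliptic regularity.

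Next, I would establish that $|u|$ is subharmonic in $\b(x_0,r)$. At any point where $u\ne 0$, one computes directly
$$
\nabla |u| = \frac{\text{Re}(\overline{u}\,\nabla u)}{|u|}, \qquad
\Delta|u| = \frac{\text{Re}(\overline{u}\,\Delta u)}{|u|} + \frac{|\nabla u|^2 - |\nabla |u||^2}{|u|}.
$$
Expanding $(D+\A)^2 u = -\Delta u - 2i\A\cdot \nabla u - i(\nabla\cdot \A)u +|\A|^2 u = 0$ and taking real parts gives $\text{Re}(\overline{u}\,\Delta u) = 2\A\cdot \text{Im}(\overline u\,\nabla u)+|\A|^2|u|^2$. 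Combining with the identity
$$
|(D+\A)u|^2 = |\nabla u|^2 - 2\A\cdot \text{Im}(\overline u\,\nabla u) + |\A|^2|u|^2,
$$
one verifies the pointwise Kato/diamagnetic inequality $|\nabla |u||^2 \le |(D+\A)u|^2$ and then the crucial nonnegativity $\Delta|u|\ge 0$ on $\{u\ne 0\}$.

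To extend subharmonicity across the zero set, I would use the standard regularization $|u|_\varep := \sqrt{|u|^2+\varep^2}$, which is smooth on all of $\b(x_0,r)$. An analogous direct computation, together with the same algebraic identity above, gives $\Delta |u|_\varep \ge 0$ everywhere in $\b(x_0,r)$. Letting $\varep\to 0^+$ in the distributional sense yields $\Delta |u|\ge 0$ in $\mathcal{D}'(\b(x_0,r))$, i.e.\ $|u|$ is subharmonic on the whole ball. Applying the classical sub–mean-value property to $|u|$ then gives $|u(x_0)|\le \fint_{\b(x_0,r)}|u|$, which is \eqref{I-1}.

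The only subtle point is handling the zero set of $u$ in the Kato inequality, but the $\varep$-regularization trick is entirely routine, and the algebraic verification of $\Delta|u|_\varep\ge 0$ proceeds by repeating the computation above with $|u|_\varep$ in place of $|u|$ and discarding a manifestly nonnegative remainder. No Agmon-distance or magnetic-well machinery from Sections 2--3 is used here; this lemma is a purely local gradient-free bound that will serve, together with the $L^2$-weighted estimates of Theorem \ref{L-2-thm} and the Neumann-function pointwise bounds of \cite{Shen-2025-2}, to produce the pointwise decay in Theorem \ref{main-thm}.
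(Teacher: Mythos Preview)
Your proposal is correct and is essentially the same approach as the paper's: the paper's one-line proof just notes that $u_\e=\sqrt{|u|^2+\e^2}$ satisfies $\Delta u_\e\ge 0$ and invokes the sub-mean-value property, which is exactly your $\varep$-regularization plus Kato/diamagnetic argument spelled out in detail. (One minor typo: the sign in your identity for $|(D+\A)u|^2$ should be $+2\A\cdot\text{Im}(\bar u\,\nabla u)$, but this does not affect the argument since it matches the sign in $\text{Re}(\bar u\,\Delta u)$ and the key relation $\text{Re}(\bar u\,\Delta u)=|(D+\A)u|^2-|\nabla u|^2$ still holds.)
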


\begin{proof}
This is well-known and follows from the observation  that if $u_\e=\sqrt{|u|^2 +\e^2}$, then
$\Delta u_\e \ge 0$.
\end{proof}

Under the condition that for some $c_0>0$, 
\begin{equation}\label{low-1}
c_0 \int_\Omega | u|^2 
\le \int_\Omega |(D+\A)u|^2 \quad \text{ for any } u \in H^1(\Omega; \C),
\end{equation}
 a Neumann function $N_\A(x, y)$ was constructed in \cite{Shen-2025-2} for the operator 
 $(D+\A)^2$ in $\Omega$,
with the following properties, 
\begin{enumerate}

\item

$N_\A(x, y)$ is H\"older continuous in  the set 
$\big\{(x, y): x, y \in \overline{\Omega} \text{ and } x\neq y\big \}$.

\item

For any $x, y\in \overline{\Omega}$ and $x\neq y$, 
\begin{equation}\label{Ne}
|N_\A(x, y) |
\le \left\{
\aligned
&  C |x-y|^{2-d} & \quad & \text{ if } d\ge 3,\\
&  C_\sigma |x-y|^{-\sigma} & \quad & \text{  if } d=2,
\endaligned
\right.
\end{equation}
where $\sigma\in (0, 1)$. The constants $C, C_\sigma$ depend on $\Omega$ and $c_0$ in \eqref{low-1}.

\item

For any $F\in L^\infty(\Omega; \C)$ and $g\in L^\infty(\partial \Omega; \C)$, the weak solution to the Neumann
problem, 
\begin{equation}\label{NP-2}
(D+\A)^2u = F \quad \text{ in } \Omega \quad
\text{ and } \quad
n \cdot (D+\A)u=g \quad \text{ on } \partial\Omega, 
\end{equation}
is given by
\begin{equation}\label{sol}
u(x) =\int_\Omega N_\A (x, y) F(y)\, dy
+i \int_{\partial\Omega} N_\A(x, y) g(y)\,  dy
\end{equation}
for any $x\in \overline{\Omega}$.
\end{enumerate}

\begin{remark}\label{re-B}
{\rm

Let $u\in H^1(\Omega; \C)$ be a weak solution of \eqref{NP-2}
with $F\in L^2(\Omega; \C)$ and $g \in L^2(\partial\Omega; \C)$.
By the energy estimates, we have 
\begin{equation}\label{energy}
\| u \|_{H^1(\Omega)}
\le C(\A) \{ \| F \|_{L^2(\Omega)} + \| g \|_{L^2(\partial\Omega)} \},
\end{equation}
where $C(\A)$ depends on $\A$. By an approximation argument, it follows from \eqref{sol} and \eqref{energy}
that \eqref{sol} continues to hold for a.e.~$x\in \Omega$.
Using  the trace inequality $\| u \|_{L^2(\partial\Omega)}
\le C \| u \|_{H^1(\Omega)}$, one can also show that \eqref{sol} holds
 for a.e.~$x\in \partial\Omega$ (with respect to the surface measure).
}
\end{remark}

\begin{lemma}\label{lemma-B}
Assume that  $\A\in C^1(\overline{\Omega}; \R^d)$ and the condition \eqref{low-1} holds.
Let $x_0\in \partial\Omega$ and $0< r< r_0$.
Suppose  $u\in H^1(\b(x_0, 2r)\cap \Omega; \C)$ and 
\begin{equation}\label{B-1}
\left\{
\aligned
(D+\A)^2 u & =0 &  \quad & \text{ in } \Omega \cap \b(x_0, 2r),\\
n\cdot (D+\A)u & =\lambda u  &  \quad &  \text{ on } \b(x_0, 2r) \cap \partial\Omega, 
\endaligned\right.
\end{equation}
where $|\lambda| r  \le 1$. Then
\begin{equation}\label{b-1-0}
\sup_{\b(x_0, r)\cap \Omega} |u|
\le C \left\{ \left(\fint_{\b(x_0, 2r)\cap \Omega} |u|^2 \right)^{1/2}
+  \left(\fint_{\b(x_0, 2r)\cap \partial\Omega} |u|^2 \right)^{1/2} \right\},
\end{equation}
where $C$ depends on $\Omega$ and $c_0$ in \eqref{low-1}.
\end{lemma}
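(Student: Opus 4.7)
The plan is to localize $u$ by a cutoff and invoke the Neumann function $N_\A$, in keeping with the representation-formula approach used elsewhere in the paper. First, I would pick $\eta\in C_0^\infty(\b(x_0,2r))$ with $\eta\equiv 1$ on $\b(x_0,3r/2)$ and $|\nabla\eta|\le C/r$, and test $\eta^2\overline u$ against $(D+\A)^2u=0$. Integration by parts on $\Omega$, combined with the boundary relation $n\cdot(D+\A)u=\lambda u$ and the hypothesis $|\lambda|r\le 1$, produces the Caccioppoli estimate
$$\int_{\b(x_0,3r/2)\cap\Omega}|(D+\A)u|^2\,dy\le\frac{C}{r^2}\int_{\b(x_0,2r)\cap\Omega}|u|^2\,dy+\frac{C}{r}\int_{\b(x_0,2r)\cap\partial\Omega}|u|^2\,dS.$$

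Next, let $\tilde\eta\in C_0^\infty(\b(x_0,3r/2))$ satisfy $\tilde\eta\equiv 1$ on $\b(x_0,r)$, with $|\nabla\tilde\eta|\le C/r$ and $|\nabla^2\tilde\eta|\le C/r^2$, and set $w=\tilde\eta u\in H^1(\Omega;\C)$. A short computation, analogous to the one in the proof of Lemma \ref{lemma-L}, shows $(D+\A)^2w=F:=2(D+\A)u\cdot D\tilde\eta+uD^2\tilde\eta$ in $\Omega$ and $n\cdot(D+\A)w=g:=\tilde\eta\lambda u+(n\cdot D\tilde\eta)u$ on $\partial\Omega$, both supported in $\b(x_0,3r/2)\cap\overline\Omega$, with $|F|\le C(r^{-1}|(D+\A)u|+r^{-2}|u|)$ and $|g|\le Cr^{-1}|u|$ (using $|\lambda|r\le 1$). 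The representation \eqref{sol}, valid a.e.\ via Remark \ref{re-B}, then yields, for almost every $x\in\b(x_0,r)\cap\Omega$,
$$u(x)=\int_\Omega N_\A(x,y)\,F(y)\,dy+i\int_{\partial\Omega}N_\A(x,y)\,g(y)\,dS(y).$$
Inserting the kernel bound $|N_\A(x,y)|\le C|x-y|^{2-d}$ (or the $d=2$ analogue), applying Cauchy--Schwarz with the classical estimate $\int_{\b(x_0,2r)}|x-y|^{2(2-d)}\,dy\lesssim r^{4-d}$, and using the Caccioppoli inequality above to replace $\|(D+\A)u\|_{L^2}$ by $L^2$ norms of $u$ on $\b(x_0,2r)\cap\overline\Omega$, produces \eqref{b-1-0}.

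The main obstacle will be controlling the boundary integral $\int_{\b(x_0,2r)\cap\partial\Omega}|x-y|^{2-d}|u|\,dS$: a straightforward $L^2$--$L^2$ Cauchy--Schwarz demands that $\int_{\partial\Omega\cap\b(x_0,2r)}|x-y|^{2(2-d)}\,dS$ be finite, which holds only for $d\le 3$ and diverges once $d\ge 4$. I would handle this by iterating the representation formula over a dyadic sequence of shrinking balls $\b(x_0,(1+2^{-k})r)$, at each step bootstrapping an $L^\infty$ bound on the smaller ball from the $L^2$ average on the slightly larger one using only the (integrable) volume kernel together with an $L^\infty$ control on $u|_{\partial\Omega}$ inherited from the previous scale, and summing a geometric series to close. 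A cleaner alternative which sidesteps $N_\A$ altogether is to observe that $v_\e:=\sqrt{|u|^2+\e^2}$ is subharmonic in $\b(x_0,2r)\cap\Omega$ (via the diamagnetic inequality together with the identity $|(D+\A)u|^2=|\nabla u|^2+2\A\cdot\mathrm{Im}(\overline u\nabla u)+|\A|^2|u|^2$) and satisfies $\partial_n v_\e=0$ on $\b(x_0,2r)\cap\partial\Omega$ (since $\overline u\,\partial_n u=i(\lambda-n\cdot\A)|u|^2$ is purely imaginary), whereupon the classical Moser iteration for subsolutions with homogeneous Neumann data on a Lipschitz domain closes the bound at once and, incidentally, renders the boundary-average term on the right-hand side of \eqref{b-1-0} superfluous.
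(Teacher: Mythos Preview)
Your Neumann-function strategy---localize by a cutoff, represent $u$ via $N_\A$, and confront the boundary kernel $|x-y|^{2-d}$---is exactly the paper's route. Where you diverge is in handling the boundary-integral singularity for $d\ge 4$: the paper evaluates the representation formula at \emph{boundary} points $x\in\partial\Omega$ and invokes the $L^p\!\to\! L^q$ mapping property of the fractional integral on $\partial\Omega$ (with $\tfrac1p-\tfrac1q<\tfrac{1}{d-1}$) to bootstrap $\|u\|_{L^p(\b(x_0,sr)\cap\partial\Omega)}$ from $p=2$ up to $p=\infty$ over a nested family of radii $1<s<t<3/2$; the resulting boundary $L^\infty$ bound then feeds back into the representation at interior points. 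Your dyadic-in-space iteration is vaguer---it is not clear how the ``$L^\infty$ control on $u|_{\partial\Omega}$ inherited from the previous scale'' is initialized when only $L^2$ data is available at the start---and would in any case have to pass through an $L^p$ bootstrap of the same kind to get going.

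Your alternative via $v_\e=\sqrt{|u|^2+\e^2}$ is correct and genuinely different. The subharmonicity $\Delta v_\e\ge 0$ (from $(D+\A)^2u=0$ and the diamagnetic inequality, as in Lemma~\ref{lemma-I}) and the homogeneous Neumann condition $\partial_n v_\e=\mathrm{Re}(\overline u\,\partial_n u)/v_\e=0$ on $\b(x_0,2r)\cap\partial\Omega$ (since $\overline u\,\partial_n u$ is purely imaginary when $\lambda$ and $\A$ are real) both check out, and Moser iteration for non-negative subsolutions with homogeneous Neumann data on a Lipschitz domain then delivers \eqref{b-1-0} with only the volume average on the right-hand side. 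This is more elementary and, as you note, renders the boundary term in \eqref{b-1-0} superfluous; the paper's route, by contrast, recycles the Neumann-function bounds already in hand and keeps everything at the level of explicit kernel estimates.
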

\begin{proof}

We assume $d\ge 3$.
The case $d=2$ follows readily from the case $d=3$ by the method of descending. 
First,  observe that by the Caccioppoli inequality,
\begin{equation}\label{Ca}
\int_{\b(x_0, 3r/2)\cap \Omega}
|(D+\A) u|^2
\le \frac{C}{r^2}
\int_{\b(x_0, 2r)\cap \Omega} |u|^2
+ C |\lambda| \int_{\b(x_0, 2r) \cap \partial\Omega} |u|^2.
\end{equation}
Next, let $1< s< t< (3/2)$ and $s_1=(t+s)/2$. 
Choose $\varphi \in C_0^\infty(\b(x_0, tr); \R)$ such that  $ 0\le \varphi\le 1$ in $\b(x_0, tr)$, $\varphi=1$ in $\b(x_0, s_1 r)$,
$|\nabla \varphi |\le C(t-s)^{-1} r^{-1}$ and
$|\nabla^2 \varphi|\le C (t-s)^{-2} r^{-2}$.
Note that  by \eqref{B-1}, 
$$
(D+\A)^2(u\varphi)
= 2 (D+\A)u \cdot D \varphi
+ u D^2 \varphi 
$$
in $\Omega$, and
$$
n\cdot (D+\A) (u \varphi)=  \lambda u \varphi + u n\cdot D\varphi
$$
on $\partial\Omega$.
It follows by Remark \ref{re-B} that
\begin{equation}\label{b-1-1}
\aligned
u(x)
 & =\int_\Omega N_\A (x, y) \big\{ 2 (D+\A)u (y) \cdot D \varphi (y) 
+ u (y)  D^2 \varphi (y)\big\} dy\\
 &\qquad
 + i \int_{\partial\Omega}
N_\A(x, y) \big\{  \lambda u (y)  \varphi (y) + u  (y) n(y) \cdot D\varphi (y)\big \} dy
\endaligned
\end{equation}
for a.e. $x\in \partial\Omega$.
In view of  the estimate \eqref{Ne}, we obtain 
\begin{equation}
\aligned
|u(x)|
 & \le \frac{C_{t, s} }{  r^{d-1}}\int_{\b(x_0, 3r/2)\cap \Omega} |(D+\A) u|
+\frac{C_{t, s} }{ r^{d}} \int_{\b(x_0, 2r)\cap \Omega} |u|\\
& \qquad
+C|\lambda|  \int_{\b(x_0, tr)\cap \partial\Omega} \frac{|u(y)|}{|x-y|^{d-2}} dy
+ \frac{C_{t, s} }{ r^{d-1}} \int_{\b(x_0, 2r)\cap\partial \Omega} |u|
\endaligned
\end{equation}
for a.e. $x\in \b(x_0, s r) \cap \partial\Omega$.
By using the estimates for fractional integrals on $\partial\Omega$, 
this leads to 
\begin{equation}\label{b-1-2}
\aligned
\left(\fint_{\b(x_0, sr)\cap \partial\Omega} |u|^q \right)^{1/q}
 & \le C_{t, s}  r \fint_{\b(x_0, 3r/2) \cap \Omega} |(D+\A) u|
+ C_{t, s} \fint_{\b(x_0, 2r)\cap \Omega} |u|\\
 & \quad
 + C_{t, s} |\lambda| r  \left(\fint_{\b(x_0, tr)\cap \partial\Omega} |u|^p \right)^{1/p}
+  C_{t, s} \fint_{\b(x_0, 2r)\cap \partial\Omega} |u|, 
\endaligned
\end{equation}
where $2\le p< q\le \infty$ and $\frac{1}{p}-\frac{1}{q}< \frac{1}{d-1}$.
By an iteration argument, we deduce from \eqref{b-1-2} that 
\begin{equation}\label{b-1-3}
\sup_{\b(x_0, r)\cap\partial\Omega} |u|
\le
C r \fint_{\b(x_0, 3r/2) \cap \Omega} |(D+\A) u|
+ C \fint_{\b(x_0, 2r)\cap \Omega} |u|
 + C   \left(\fint_{\b(x_0, 2r)\cap \partial\Omega} |u|^2 \right)^{1/2},
\end{equation}
where we have used the assumption $|\lambda| r\le 1$.
The desired estimate \eqref{b-1-0} follows readily from \eqref{Ca} and \eqref{b-1-3}.
\end{proof}

For a function $u$ in $C(\Omega; \C)$, we use $(u)^*$ to denote the nontangential 
maximal function of $u$, defined by
$$
(u)^* (x)
=\sup \{ |u(y)|: \ y \in \Omega \text{ and } |y-x|< C_0  \text{\rm dist}(y, \partial\Omega) \}
$$
for $x\in \partial\Omega$, where $C_0>1$ is a fixed constant depending on the Lipschitz character of $\Omega$.

\begin{lemma}\label{lemma-t}
Let $u\in C^2 (\Omega; \C) \cap C(\overline{\Omega}, \C)$ be a solution of the Dirichlet problem,
\begin{equation}\label{DP-1}
(D+ \A)^2 u=0 \quad \text{ and } \quad
u=f \quad \text{ on } \partial\Omega.
\end{equation}
 Then, for $2\le p\le \infty$, 
\begin{equation}\label{est-t}
\| (u)^*\|_{L^p(\partial\Omega)}
\le C \| f \|_{L^p(\partial\Omega)},
\end{equation}
where $C$ depends on $\Omega$.
\end{lemma}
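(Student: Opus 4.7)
The plan is to reduce the estimate for the magnetic equation $(D+\A)^2 u = 0$ to the classical nontangential maximal function estimate for harmonic functions in Lipschitz domains, exploiting the subharmonicity of $|u|$ already recorded in Lemma \ref{lemma-I}.

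First I would handle the endpoint $p=\infty$ directly. By Lemma \ref{lemma-I} applied locally, $u_\varep = \sqrt{|u|^2 + \varep^2}$ is subharmonic in $\Omega$, and letting $\varep \to 0$ shows $|u|$ is subharmonic. Since $|u| \in C(\overline{\Omega})$, the weak maximum principle in a Lipschitz domain yields $\sup_\Omega |u| \le \sup_{\partial\Omega} |f|$, and consequently $\|(u)^*\|_{L^\infty(\partial\Omega)} \le \|f\|_{L^\infty(\partial\Omega)}$.

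For $p=2$, I would compare $u$ with the harmonic extension. Let $U \in C(\overline{\Omega})$ be the solution of $\Delta U = 0$ in $\Omega$ with $U = |f|$ on $\partial\Omega$ (obtained via harmonic measure; for $f \in L^p$ one first works with continuous $f$ and passes to the limit by density). Then $|u| - U$ is subharmonic and non-positive on $\partial\Omega$, so the maximum principle gives $|u(x)| \le U(x)$ for all $x \in \Omega$. This pointwise domination is inherited by the nontangential maximal function,
\begin{equation*}
(u)^*(x) \le (U)^*(x) \qquad \text{for all } x \in \partial\Omega.
\end{equation*}
The classical theorem of Dahlberg for harmonic functions in bounded Lipschitz domains then gives $\|(U)^*\|_{L^2(\partial\Omega)} \le C \| |f| \|_{L^2(\partial\Omega)} = C\|f\|_{L^2(\partial\Omega)}$, with $C$ depending only on the Lipschitz character of $\Omega$. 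Combining with the $L^\infty$ bound above yields \eqref{est-t} for the endpoints $p=2$ and $p=\infty$, and the intermediate range $2 < p < \infty$ follows by Marcinkiewicz interpolation (the sublinear operator $f \mapsto (u)^*$ being of strong types $(2,2)$ and $(\infty,\infty)$).

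The only delicate point is invoking Dahlberg's $L^2$ estimate in a merely Lipschitz domain; this is the nontrivial input, but it is a classical and well-documented fact, so no obstacle remains. Everything else (subharmonicity of $|u|$, maximum principle, pointwise comparison, interpolation) is routine. The approximation argument to pass from continuous $f$ to $f \in L^p$ is standard and uses the trace-type estimate together with the density of $C(\partial\Omega)$ in $L^p(\partial\Omega)$.
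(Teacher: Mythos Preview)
Your proposal is correct and follows essentially the same route as the paper: the paper's proof simply records that $|u|\le v$ in $\Omega$, where $v$ is the harmonic function with boundary data $|f|$, and then invokes the classical nontangential maximal function estimates for harmonic functions in Lipschitz domains. Your argument spells out the same comparison in more detail; the separate treatment of $p=\infty$ and the interpolation step are harmless but unnecessary, since once $(u)^*\le (U)^*$ pointwise, Dahlberg's theorem already gives $\|(U)^*\|_{L^p(\partial\Omega)}\le C\|f\|_{L^p(\partial\Omega)}$ for every $p\ge 2$ directly.
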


\begin{proof}
This follows from the observation that $|u|\le v$ in $\Omega$, where $v$ is the harmonic function in $\Omega$ with 
boundary data $|f|$ on $\partial\Omega$.
See \cite[Theorem 2.5]{Shen-2025-2}.
In particular, a solution of \eqref{DP-1} satisfies the maximum principle, 
\begin{equation}\label{max-p}
\max_{\overline{\Omega}} |u| \le \max_{\partial\Omega} |u|.
\end{equation}
\end{proof}

\begin{thm}\label{thm-p0}
Assume that  $\A\in C^1(\overline{\Omega}; \R^d)$ and the condition \eqref{low-1} holds.
Let $u\in H^1(\Omega; \C)$ be a  weak solution of 
\begin{equation}\label{EF-2}
\left\{
\aligned
(D+\A )^2 u  & =0& \quad & \text{ in } \Omega, \\
n \cdot (D+ \A)u & =\lambda u & \quad & \text{ on } \partial\Omega
\endaligned
\right.
\end{equation}
for some $\lambda>0$. Then $u \in C(\overline{\Omega}; \C)$ and for any $x\in \overline{\Omega}$, 
\begin{equation}\label{p0-1}
|u(x)|\le C \lambda^{\frac{d-1}{2}} \| u \|_{L^2(\partial\Omega)},
\end{equation}
where $C$ depends on $\Omega$ and $c_0$ in \eqref{low-1}.
\end{thm}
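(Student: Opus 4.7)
The proof proceeds in two stages: first, reducing the interior $L^\infty$ bound to an $L^\infty$ bound on $\partial\Omega$ via the maximum principle; second, proving the boundary bound by converting the Steklov equation into a boundary integral equation using the Neumann function and then running a fractional-integral bootstrap.

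To show $u \in C(\overline{\Omega})$, interior continuity follows from Lemma \ref{lemma-I} applied to $u_\e = \sqrt{|u|^2 + \e^2}$ together with standard Moser/De Giorgi arguments, while boundary continuity follows from Lemma \ref{lemma-B} at scale $r = \min(r_0, 1/\lambda)$ combined with an oscillation-decay argument compatible with the Steklov condition. Since $u$ is then a $C^2(\Omega)\cap C(\overline{\Omega})$ solution of the Dirichlet problem with data $f := u|_{\partial\Omega}$, Lemma \ref{lemma-t} gives the maximum principle
\[
\|u\|_{L^\infty(\overline{\Omega})} \le \|u\|_{L^\infty(\partial\Omega)},
\]
which reduces the theorem to proving $\|u\|_{L^\infty(\partial\Omega)} \le C\lambda^{(d-1)/2}\|u\|_{L^2(\partial\Omega)}$.

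For the boundary bound, I would use the representation formula \eqref{sol} with $F = 0$ and $g = \lambda u$, extended to boundary values via Remark \ref{re-B}, yielding
\[
u(x) = i\lambda \int_{\partial\Omega} N_\A(x, y)\, u(y)\, dy \qquad (x \in \overline{\Omega}).
\]
Combined with the kernel bound $|N_\A(x, y)| \le C|x - y|^{2-d}$ from \eqref{Ne}, this presents $u$ as $\lambda$ times a fractional integral of order $1$ of $u|_{\partial\Omega}$ on the $(d-1)$-dimensional Lipschitz boundary. For any $q > d - 1$, H\"older's inequality yields $\|u\|_{L^\infty(\overline{\Omega})} \le C_q \lambda \|u\|_{L^q(\partial\Omega)}$, since $\sup_x \|N_\A(x, \cdot)\|_{L^{q'}(\partial\Omega)}$ is finite precisely when $q'(d-2) < d-1$, i.e.\ $q > d - 1$. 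Interpolating $\|u\|_{L^q(\partial\Omega)} \le \|u\|_{L^\infty(\partial\Omega)}^{1 - 2/q}\|u\|_{L^2(\partial\Omega)}^{2/q}$ and absorbing then gives $\|u\|_{L^\infty} \le C_q^{q/2}\lambda^{q/2}\|u\|_{L^2(\partial\Omega)}$ for any $q > d - 1$.

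The main technical obstacle is passing to the endpoint $q = d - 1$ to recover the sharp exponent $(d-1)/2$, since $\int|x - y|^{-(d-1)}\,dy$ diverges logarithmically on $\partial\Omega$. This is handled by a Lorentz-space refinement: the kernel $N_\A(x, \cdot)$ lies in $L^{(d-1)/(d-2), \infty}(\partial\Omega)$ uniformly in $x$, and H\"older in Lorentz spaces combined with real interpolation between $L^2(\partial\Omega)$ and $L^\infty(\partial\Omega)$ delivers the sharp bound. The two-dimensional case requires a small modification using the logarithmic kernel estimate $|N_\A(x, y)| \le C_\sigma |x - y|^{-\sigma}$ from \eqref{Ne}, with $\sigma$ chosen close to zero so that the same interpolation scheme gives $\lambda^{1/2}$.
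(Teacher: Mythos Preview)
Your strategy---global representation $u=i\lambda\int_{\partial\Omega}N_\A(x,y)u(y)\,dy$ followed by a fractional-integral bootstrap and a Lorentz endpoint---is genuinely different from the paper's. The paper never iterates the boundary integral equation. Instead it works \emph{locally at the scale $r=c\lambda^{-1}$}: for $x$ with $\b(x,r)\subset\Omega$ it applies the sub-mean-value inequality of Lemma~\ref{lemma-I}, and for $x$ within $c\lambda^{-1}$ of $\partial\Omega$ it applies the local $L^\infty$--$L^2$ estimate of Lemma~\ref{lemma-B} on $\b(x_0,r)$. In both cases the resulting $L^2$ averages over $\b\cap\Omega$ are controlled by $\|(u)^*\|_{L^2(\partial\Omega)}$ through Lemma~\ref{lemma-t}, which immediately produces the exponent $(d-1)/2$ with no endpoint analysis and uniformly in $d\ge 2$.

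Your argument is clean for $d\ge 4$, but it has a genuine gap when $d=3$. There the sharp exponent forces $q=d-1=2$, so the Lorentz step needs $\|u\|_{L^{2,1}(\partial\Omega)}$ on the right. But $L^{2,1}\subsetneq L^2$, and real interpolation between $L^2$ and $L^\infty$ only yields $L^{p,r}$ with $p>2$; it cannot reach $L^{2,1}$. The natural repair---interpolating instead between $L^1$ and $L^\infty$ to get $\|u\|_{L^{2,1}}\le C\|u\|_{L^1}^{1/2}\|u\|_{L^\infty}^{1/2}\le C\|u\|_{L^2}^{1/2}\|u\|_{L^\infty}^{1/2}$---returns only $\|u\|_{L^\infty}\le C\lambda^{2}\|u\|_{L^2}$ after absorption, not $\lambda^{1}$. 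Iterating Hardy--Littlewood--Sobolev likewise gives at best $\lambda^{1+\e}$. So in $d=3$ your scheme, as written, does not deliver the stated power. The $d=2$ sketch has a parallel issue: the constant $C_\sigma$ in \eqref{Ne} is not asserted to stay bounded as $\sigma\to 0$, so sending $q\to 1$ is not justified without further input on the Neumann function. The paper's local-scale argument via $(u)^*$ sidesteps all of this.
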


\begin{proof}
The fact $u\in C(\overline{\Omega}; \C)$ follows from the representation  formula \eqref{b-1-1}.
To show \eqref{p0-1}, 
we use  \eqref{low-1} and  the trace inequality,
\begin{equation}
c\int_{\partial\Omega} |u|^2
\le \int_\Omega | (D+\A) u|^2 + \int_\Omega |u|^2
\end{equation}
for $u\in H^1(\Omega; \C)$ to obtain  $\lambda\ge c$ for some $c>0$ depending only on $\Omega$ and $c_0$.
Let $r=c \lambda^{-1}< r_0$.
If $\b(x_0, r)\subset \Omega$, we use the interior estimate \eqref{I-1} to obtain 
\begin{equation}
\aligned
|u(x_0)| & \le \left(\fint_{\b(x_0, r)} |u|^2 \right)^{1/2}
\le \frac{C}{r^{\frac{d-1}{2}}} \| (u)^*\|_{L^2(\partial\Omega)}\\
& \le C \lambda^{\frac{d-1}{2}} \| u \|_{L^2(\partial\Omega)},
\endaligned
\end{equation}
where we have used \eqref{est-t} for the last step.
This gives the estimate \eqref{p0-1} for any $x\in \Omega$ with dist$(x, \partial\Omega)\ge c\lambda^{-1}$.

For $x\in \overline{\Omega}$ with dist$(x, \partial\Omega)<  c\lambda^{-1}$.
We choose $x_0\in \partial\Omega$ so that $x\in \b(x_0, c\lambda^{-1})$.
It follows from \eqref{b-1-0} that 
\begin{equation}
\aligned
|u(x)|
&\le C \left\{ { r^{\frac{1-d}{2}}}  \| u \|_{L^2(\partial\Omega)}
+{ r^{\frac{1-d}{2}}} \|  (u)^* \|_{L^2(\partial\Omega)}\right\}\\
& \le C  \lambda^{\frac{d-1}{2}} \| u \|_{L^2(\partial\Omega)},
\endaligned
\end{equation}
where $r=c\lambda^{-1}$ and we have used \eqref{est-t} for the last inequality.
\end{proof}


We are now ready to give the proof of Theorem \ref{main-thm}.

\begin{proof}[\bf Proof of Theorem \ref{main-thm}]

Without the loss of generality, we assume $\| u \|_{L^2(\partial\Omega)}=1$.
For $x_0\in \overline{\Omega}$, let
$$
r=r(x_0)=c\,  m(x_0, \beta \B)^{-1}.
$$
\noindent {\bf Case I.}
Suppose $\b(x_0, r(x_0))\subset\Omega$. It follows from \eqref{I-1} that 
\begin{equation}
\aligned
|u(x_0)| & \le \left(\fint_{\b(x_0, r)} |u|^2 \right)^{1/2}\\
& \le C e^{-\e d_{\beta, C\lambda} (x_0)}  m(x_0, \beta \B)^{-1}
\left(\fint_{\b(x_0, r)}  m(x, \beta\B)^2 | e^{\e d_{\beta, C\lambda} (x) } u|^2 \right)^{1/2},
\endaligned
\end{equation}
where we have used the facts that $m(x, \beta \B)\approx m(x_0, \beta \B)$ and
$|d_{\beta, C\lambda}(x)-d_{\beta, C \lambda}(x_0)|\le C$ for $x\in \b(x_0, m(x_0, \beta\B)^{-1})$.
In view of \eqref{L-3}, it follows that 
\begin{equation}\label{p-10}
|u(x_0)|
  \le C e^{-\e d_{\beta, C\lambda} (x_0)}  \big\{ m(x_0, \beta \B) \big\}^{\frac{d-2}{2}}  \lambda^{\frac{1}{2}}.
\end{equation}
This, together with \eqref{a-3}, gives
\begin{equation}
\aligned
|u(x_0)|
   &  \le C  \lambda^{\frac{d-1}{2}} \big\{ 2+ d_{\beta, C\lambda} (x_0) \big\}^{\ell_1 d }
   e^{-\e d_{\beta, C\lambda} (x_0)} \\
   &  \le C  \lambda^{\frac{d-1}{2}}
   e^{-\e_1 d_{\beta, C\lambda} (x_0)},
   \endaligned
\end{equation}
where $\e_1=(\e/2)$.

\medskip

\noindent{\bf Case II.} Suppose $\b(x_0, r(x_0))\cap \partial\Omega\neq \emptyset$.
Choose $y_0\in \b(x_0, r(x_0))\cap \partial\Omega$.
Then $y_0\in \partial\Omega$ and  $x_0\in \b (y_0, C m(y_0, \beta \B)^{-1}) \cap\Omega$.
If $y_0\in W_\beta (C \lambda)$, we have
$$
d_{\beta, C \lambda} (x_0)
\le d_{\beta, C \lambda}(y_0) + d_\beta (x_0, y_0)
\le C.
$$
Hence, by Theorem \ref{thm-p0},
$$
|u(x_0)|\le C \lambda^{\frac{d-1}{2}} \le C e^{-\e d_{\beta, C\lambda}(x_0)}.
$$
If $y_0\notin W_\beta (C \lambda)$, then  $m(y_0, \beta \B) \ge C\lambda$.
Let $r= C m(y_0, \beta \B)^{-1}<  C \lambda^{-1}$.
It follows from \eqref{b-1-0}, which continues to hold if $ \lambda r \le C$,   that 
$$
\aligned
|u(x_0)|^2
 & \le \frac{C}{r^{d-1}}\int_{\b(y_0, 2r)\cap\partial \Omega} |u|^2
+\frac{C}{r^d}\int_{\b(y_0, 2r) \cap \Omega} |u|^2\\
& \le C e^{- 2\e d_{\beta, C \lambda} (x_0)}
\left\{ \frac{1}{r^{d-1}}
\int_{\b (y_0, 2r)\cap \partial\Omega}  e^{2 \e d_{\beta, C\lambda}} |u|^2
+\frac{1}{r^d}
\int_{\b(y_0, 2r)\cap \Omega}
e^{2 \e d_{\beta, C\lambda}} |u|^2\right\}.
\endaligned
$$
 In view of \eqref{L-2}-\eqref{L-3}, this leads to 
 \begin{equation}
 \aligned
 |u(x_0)|^2
  & \le C  e^{- 2\e d_{\beta, C \lambda} (x_0)}
 \left\{  r^{1-d} + \lambda r^{2-d} \right\} \\
 &  \le C  e^{- 2\e d_{\beta, C \lambda} (x_0)}
 m(y_0, \beta \B)^{d-1}\\
   &  \le C  e^{- 2\e d_{\beta, C \lambda} (x_0)}
 m(x_0, \beta \B)^{d-1},
 \endaligned
 \end{equation}
 where we have used $r\lambda< C$.
 This, together with \eqref{a-3}, yields the desired estimate \eqref{main-1},
 as in Case I.
\end{proof}


\section{Some examples}

Suppose $\B$ is of finite type on $\overline{\Omega}$ and $\beta > \beta_0$.
Let $u$ be  an eigenfunction associated with the ground state energy $\lambda
=\lambda^{D\!N}(\beta \A, \Omega)\approx \beta^{\frac{1}{\kappa_0+2}}$, 
where $\kappa_0\ge 0$ is given by \eqref{kappa}.

\medskip

\noindent{\bf The non-vanishing case.}

Suppose $\tau_0=\min_{\partial\Omega} |\B| >0$ and thus $\kappa_0=0$.
Then $\lambda\approx \beta^{\frac12}$ for $\beta>\beta_0$.
Also,  $ \widetilde{m}(x, \beta \B)\approx \beta ^{\frac12}$ for $x$ close to $\partial\Omega$ and $\beta>\beta_0$.
Hence, $d_\beta (x, \partial\Omega)\approx \beta^{\frac12} \text{\rm dist} (x, \partial \Omega)$
for $x$ close to $\partial\Omega$.
It follows from \eqref{main-1} that   if $\beta >\beta_0$, 
\begin{equation}\label{nv-1}
| u(x)|\le C \beta^{\frac{d-1}{4}} \exp \big\{ -\e \beta^{\frac12} \text{\rm dist}(x, \partial\Omega) \big\}
\| u \|_{L^2(\partial\Omega)}
\end{equation}
for $x$ close to $ \partial\Omega$,
where $C>1$, $\e>0$ and $\beta_0>1$ depend only on $\Omega$ and $\B$.
By the maximum principle \eqref{max-p},  the estimate \eqref{nv-1}   holds for any $x\in \overline{\Omega}$.

\medskip

\noindent{\bf A case of higher-order vanishing.}

Suppose that $\kappa_0\ge 1$ and the set $\Gamma=\{ x\in \partial\Omega:  \kappa (x)=\kappa_0 \}$ is discrete.
Also assume that  there exists $c>0$ such that 
\begin{equation}\label{v-1}
\sum_{|\alpha|\le \kappa_0-1}
|\partial^\alpha \B(x)|^{\frac{1}{\kappa_0-|\alpha|}}
\ge c \text{\rm dist}(x, \Gamma)
\end{equation}
 for any $x\in \partial\Omega$.
 Recall that  $\lambda\approx \beta^{\frac{1}{\kappa_0+2}}$ for $\beta> \beta_0$.
It follows from \eqref{WW} and \eqref{v-1} that
\begin{equation}\label{W-11}
W_\beta (C\lambda)
\subset \big\{ x\in \partial\Omega: 
\text{\rm dist}(x, \Gamma) \le C \beta^{-\frac{1}{\kappa_0+2}}\big \}.
\end{equation}
Note that by \eqref{mm} and \eqref{v-1},
\begin{equation}
\aligned
d_\beta (x, x_0)
 & \ge c \min_{|\alpha|\le \kappa_0-1} \beta^{\frac{1}{|\alpha|+2}} |x-x_0|^{\frac{\kappa_0+2}{|\alpha|+2}}\\
 &\ge c ( \beta |x-x_0|^{\kappa_0+2})^{\frac{1}{\kappa_0+1}}
\endaligned
\end{equation}
if  $x\in \overline{\Omega} \cap \b(x_0, c_0)$ for some $x_0\in \Gamma$ and $|x-x_0|\ge \beta^{-\frac{1}{\kappa_0+2}}$.
By adjusting the constant $c$, the estimate above continues to hold for any $x\in\overline{\Omega}$ with
dist$(x, \Gamma) \ge \beta^{-\frac{1}{\kappa_0+2}}$.
As a result, we deduce from \eqref{main-1} that  for $\beta$ large, 
\begin{equation}\label{v-2}
|u(x)| \le C\beta^{\frac{d-1}{2 (\kappa_0+2)}}
\exp\big\{ -\e \big [ \beta\,  \text{\rm dist}(x, \Gamma)^{\kappa_0+2}]^{\frac{1}{\kappa_0+1}}\big\}
\| u \|_{L^2(\partial\Omega)}
\end{equation}
for any $x\in \overline{\Omega}$, where $C, \e>0$ are independent of $\beta$.

 \bibliographystyle{amsplain}
 
\bibliography{S2025-3.bbl}




\end{document}